\documentclass[11pt]{amsart}
\usepackage{mathrsfs}
\usepackage{mathtools}
\usepackage{dsfont}
\usepackage{amsmath,amssymb,amsthm,upref,graphicx,mathrsfs}
\usepackage{enumerate}
\usepackage{bbm}

\usepackage{color}
\usepackage[
  colorlinks=true,
  linkcolor=blue,
  citecolor=blue,
  urlcolor=blue]{hyperref}

\usepackage{appendix}

\numberwithin{equation}{section}

\usepackage{geometry} 

\numberwithin{equation}{section}

\theoremstyle{plain}
\newtheorem{thm}{Theorem}[section]
\newtheorem{lem}[thm]{Lemma}
\newtheorem{prop}[thm]{Proposition}
\newtheorem{cor}[thm]{Corollary}
\newtheorem{defn}[thm]{Definition}
\newtheorem{rem}[thm]{Remark}
\newtheorem{com}[thm]{Comment}

\newcommand{\R}{\mathbb{R}}
\newcommand{\N}{\mathbb{N}}

\newcommand{\Z}{\mathbb{Z}}

\newcommand{\E}{\mathbb{E}}

\newcommand{\V}{\mathbb{V}}

\newcommand{\Ex}{\mathcal{E}}

\newcommand{\normmm}[1]{{\left\vert\kern-0.25ex\left\vert\kern-0.25ex\left\vert #1
   \right\vert\kern-0.25ex\right\vert\kern-0.25ex\right\vert}}

\newcommand{\T}{\tau}

\newcommand{\M}{\mathcal{M}}
\newcommand{\A}{\mathcal{A}}

\begin{document}

\title[Deviation inequalities]{Large deviation inequalities for noncommutative martingales}
 
\author[Jiao]{Yong Jiao}
\address{School of Mathematics and Statistics,  HNP-LAMA, Central South University, Changsha 410083, China}
\email{jiaoyong@csu.edu.cn}

\author[Luo]{Sijie Luo${}^{*}$}
\address{School of Mathematics and Statistics, HNP-LAMA, Central South University, Changsha 410083, China}
\email{sijieluo@csu.edu.cn}

\author[Zhou]{Dejian Zhou}
\address{School of Mathematics and Statistics, HNP-LAMA,
Central South University, Changsha 410083, China}
\email{zhoudejian@csu.edu.cn}
%

\thanks{$*$ Corresponding author.}


\subjclass[2020]{Primary 46L53; Secondary 46L52}
\keywords{Large deviations; Noncommutaive martingales; Noncommutative independences; Noncommutaitve ergodic theory}

%
%
\begin{abstract}
We establish noncommutative analogs of some well-known large deviation inequalities for noncommutative random variables. Firstly, for the noncommutative independent case,  we characterize the uniformly exponential integrability of random variables in terms of large deviation inequalities. Secondly, for noncommutative martingale differences, we establish two deviation inequalities according to the exponential integrability and $L_{p}$-boundedness of the martingale differences, respectively. Finally, we establish a noncommutative version of Gordin’s decomposition, which enables us to derive a noncommutative ergodic theorem via deviation inequalities for noncommutative martingales.
\end{abstract}

\maketitle

\section{Introduction}
The study of large deviation inequalities is one of the essential themes in probability theory, which has been well developed by Bernstein, Cram\'{e}r,  Hoeffding \cite{Ho1963}, Azuma \cite{Az1967}, \cite{dlP1999} and many other mathematicians. We refer to the comprehensive monographs \cite{CT1997} and \cite{Pe1995} for more details on the active theory. Let $(d_{j})_{j=1}^{\infty}$ be a sequence of random variables on a fixed probability space $(\Omega,\mathcal{F},\mathbb{P})$ and $S_{n}=\sum_{j=1}^{n}d_{j}$ be the partial sum of $(d_{j})_{j=1}^{\infty}$. In the theory of deviation inequality, one mainly focus on inequality as the following form:
\begin{equation}\label{eq:ldi}
	\mathbb{P}(|S_n|>nr)= O(e^{-c_rn}),~\mbox{for}~r>0,
\end{equation}
and $c_{r}$ is a positive constant depending only on $r$. When $(d_{j})_{j=1}^{\infty}$ is a mean zero independent and identically distributed (i.i.d. for short) sequence, a well-known result states that \eqref{eq:ldi} holds if and only if the sequence fulfills the Cram\'{e}r condition (see e.g. \cite[p.\,137]{Pe1995}): there is $\delta>0$ with
\[
\sup_{j\in \N}\E[e^{\delta|d_{j}|}]<\infty.
\]
If the sequence $(d_{j})_{j=1}^{\infty}$ fails to be i.i.d., Lesigne  and Voln\'{y} \cite{LV2001} proved that, under the Cram\'{e}r condition, one gets
\begin{equation}\label{eq:ldi md}
	\mathbb{P}(|S_n|>nr)= O(e^{-c_rn^{1/3}}),
\end{equation}
whenever $(d_{j})_{j=1}^{\infty}$ forms a martingale differences. Notably, Lesigne  and Voln\'{y} \cite{LV2001} showed that the power $1/3$ in \eqref{eq:ldi md} is optimal by constructing a stationary and ergodic martingale differences such that $\mathbb{P}(|S_n|>n)\geq e^{-cn^{1/3}}$. 
On the other hand, if the martingale differences satisfy the $L_{p}$-boundedness condition as $\sup_{j\in \N}\|d_{j}\|_{p}<\infty$ for some $1<p<\infty$, we have 
\begin{equation}\label{eq:ldi md p}
\mathbb{P}(|S_n|>nr)= O\left(c_{r,p}n^{p\left(\frac{1}{\min\{2,p\}}-1\right)}\right).
\end{equation}
Lesigne and Voln\'{y} \cite[Theorem 3.6]{LV2001} obtained \eqref{eq:ldi md p} for $2\leq p<\infty$, and demonstrated its optimality for strictly stationary and ergodic martingale differences. Subsequently,  Li \cite{Li2003} established \eqref{eq:ldi md p} for the case $1<p<2$, and obtained the optimality for i.i.d. sequences. In particular, \eqref{eq:ldi md p} serves as an efficient tool in deriving the convergence speed in the ergodic theory when combined with Gordin's decomposition. We refer readers to \cite[Corollary 4.4]{LV2001} for more details. Recently, in the series of works of Fan, Grama, and Liu \cite{FGL2012, FGL2012-2, FGL2013, FGL2015, FGL2017}, the authors improved upon Lesigne and Voln\'{y}'s result by establishing various large deviation inequalities and exploring their applications. Specifically, the authors in \cite{FGL2012} extended  \eqref{eq:ldi md} to the following
\begin{equation}\label{eq:ldi mcd}
\mathbb{P}(|S_n|>nr)= O\left(e^{-c_rn^{\alpha}}\right),\quad \alpha\in (0,1),
\end{equation}
whenever the sequence satisfies a modified Cram\'{e}r condition 
\[
\sup_{j\in \N}\E[\exp\{|d_{j}|^{2\alpha/(1-\alpha)}\}]<\infty,~\mbox{for some}~\alpha\in(0,1).
\]
Optimality of \eqref{eq:ldi mcd} was also obtained in \cite{FGL2012}, and it is easy to see that \eqref{eq:ldi mcd} reduces to \eqref{eq:ldi md} whenever $\alpha=1/3$.

Inspired by the study of noncommutative probability, we aim to extend \eqref{eq:ldi md}-\eqref{eq:ldi mcd} to the noncommutative framework in the present paper. In the fundamental work of Pisier and Xu \cite{PX1997}, they found the appropriate definition of martingale Hardy spaces and established the noncommutative Burkholder-Gundy inequality. This breakthrough led to the development of the noncommutative martingale theory, which has become a critical field in probability and operator theory, attracting considerable attention and expanding rapidly. We refer the reader to Junge \cite{Ju2002} for noncommutative Doob maximal inequality; to   Randrianantoanina \cite{Ra2002} for  the weak type $(1,1)$ inequality for martingale transform; to \cite{JX2003, JX2008} for  noncommutative Burkholder/Rosenthal inequalities; to Parcet and Randrianantoanina  \cite{PR2006} for the Gundy decomposition of noncommutative martingales. Up to this point, the noncommutative martingale theory has been applied to investigate various areas such as random matrices, Banach space theory, and harmonic analysis (see e.g. \cite{JX2008}, \cite{Pa2009}, \cite{SZ2020}). We also refer the reader to \cite{CRX2023, JZZ2023, Ra2023} for very recent progress of noncommutative martingales.

As mentioned above, after Pisier and Xu's fundamental work, a great deal of effort has been expended on moment inequalities for noncommutative martingales, providing a comprehensive understanding of the theory. However, to the best of our knowledge, extending deviation inequalities to the noncommutative framework has not yet to be extensively explored. Currently, there are only a few established results in this area, including noncommutative Bernstein inequality and Bennett inequality established by Junge and Zeng in \cite{{JZ2013},{JZ2015}}, and the noncommutative Hoeffding-Azuma inequality and McDiarmid inequality by Sadeghi and Moslehian in \cite{SM2014}. One of the primary motivations of this paper is to extend deviation inequalities to the noncommutative setting. More precisely, we apply the fundamental tools from noncommutative martingale theory to establish the noncommutative analogs of \eqref{eq:ldi md}-\eqref{eq:ldi mcd}.

Our paper is organized as follows. In Section \ref{sec:pre}, we recall background and necessary concepts in the noncommutative martingale theory. Moreover, a slight strengthening of the noncommutative Hoeffding-Azuma inequality via Cuculescu projections is also included. Our main result concerns deviation inequalities for sum of independent noncommutative random variables are presented in Section \ref{sec:ldi}. Specifically, Theorem \ref{thm:ldi} is a noncommutative counterpart of \eqref{eq:ldi}. Section \ref{sec:ldi md} is devoted to extending and refining two basic deviation inequalities of Lesigne and Voln\'{y} to noncommutative martingales. Finally, in Section \ref{sec:application}, we adapt the decomposition initiated by Voln\'{y} \cite{Vo1993} and apply our Theorem \ref{Lp-version 1} to study the convergence speed in noncommutative ergodic theory.

Throughout the paper, all notations and symbols are standard. Let $(\M,\tau)$ be a fixed von Neumann algebra equipped with a normal faithful tracial state $\tau$ with the unit $\mathbf{1}$, and we simply refer $(\M,\tau)$ as a noncommutative probability space. For a number $p$, the notation $c_{p}$ means a constant depending only on $p$ and we use $\asymp_{p}$ to stand for the equivalence up to some constant $c_{p}$, that is, $A\asymp_{p} B$ if and only if there exist  $c_{p}$ and $d_{p}$ such that $c_{p}A\leq B\leq d_{p}A$. For positive functions $F$ and $G$ on $\N$, we use the big $O$ notation $F(n)=O(G(n))$ to stand that there exists a universal constant $K>0$ with $F(n)\leq KG(n)$ as $n\to \infty$.



\section{Preliminaries}\label{sec:pre}

\subsection{Noncommutative Lebesgue spaces}
The algebra of all $\tau$-measurable operators are denoted by  $L_0(\mathcal{M})$.    Suppose that $a$ is a self-adjoint $\tau$-measurable operator and let $a=\int_{-\infty}^{\infty} \lambda d e_{\lambda}$ stand for its spectral decomposition. For any Borel subset $B$ of $\mathbb{R}$, the spectral projection of $a$ corresponding to the set $B$ is defined by $\mathds{1}_B(a)=\int_{-\infty}^{\infty} \mathds{1}_B(\lambda) d e_{\lambda}$.  
For $1\leq p\leq \infty$, let $L_p(\mathcal{M}, \tau)$ (simply $L_p(\mathcal{M})$) be the associated noncommutative Lebesgue spaces.  As usual, $L_{\infty}(\mathcal{M})$ is just $\mathcal{M}$ with the usual operator norm $\|\cdot\|_{\M}$. For $1\leq p<\infty$, the norm on $L_p(\M)$ is naturally defined by 
$$\|x\|_p=[\tau(|x|^p)]^{1/p},\quad  x\in L_p(\M),$$
where $|x|=(x^*x)^{1/2}$ is the usual modulus of $x$. 

For $x\in L_0({\M})$, the generalized singular value function $\mu(x)$ is defined by
\[
\mu(t,x)=\inf\{s>0:\tau\big(\mathds{1}_{(s,\infty)}(|x|)\big)\leq t\},\quad t>0.
\]
The function $t\mapsto \mu(t,x)$ is decreasing and right-continuous; for a more detailed study of the singular value function we refer the reader to \cite{FK1986}. According to \cite{FK1986}, the Fubini theorem yields that, for each $1\leq p<\infty$, we have
\begin{equation}\label{eq:d-r}
\|x\|_{p}^p= \int_0^1\mu(t,x)^pdt= p\int_0^{\infty}\lambda^{p-1} \tau[\mathds{1}_{(\lambda,\infty)}(|x|)] d\lambda.
\end{equation}
For $0<p<\infty$ and  positive element $a\in L_{p}(\M)$, by the functional calculus of $a$ and the Lebesgue-Stieltjes measure associated with function $F_{a}(t)=\tau\left(\mathds{1}_{(t,\infty)}(a)\right)$, we have
\begin{equation}\label{eq:dsb}
\tau(a^p\mathds{1}_{(u,\infty)}(a))= -\int_u^{\infty}t^p dF_a(t).
\end{equation}

\subsection{Noncommutative martingales}

Let $(\mathcal{M}_n)_{n\geq 1}$ be an increasing sequence of von Neumann subalgebras of  $\mathcal{M}$ such that $\bigcup_{n\geq1}\mathcal{M}_n$ is weak-$*$ dense in $\mathcal{M}.$
Let $\mathcal{E}_n$ be the  conditional expectation (the existence of $\mathcal E_n$ is referred to \cite[Proposition V.2.36]{Ta1979}) from $\mathcal{M}$ onto $\mathcal{M}_n.$ An adapted sequence $x=(x_n)_{n\geq1}$ in $L_{1}(\M)$ is called a noncommutative martingale with respect to $(\mathcal M_n)_{n\geq1}$ if
\[
\mathcal E_n(x_{n+1})=x_n,\qquad \forall n\geq1.
\]
The corresponding martingale differences $(dx_{k})_{k\geq 1}$ for a given sequence $x=(x_n)_{n\geq1}$ is defined by  $dx_{1}\coloneqq x_{1}$ and
\[
dx_{k}\coloneqq x_k-x_{k-1},\qquad \forall k\geq2.
\]
In the sequel, we will remove the term "noncommutative" when referring to a noncommutative martingale unless it causes confusion.
A martingale $(x_n)_{n\geq1}\subseteq L_p(\mathcal{M})$ for some $1\leq p\leq \infty$ is called an $L_p$-bounded martingale if
\[
\|x\|_{p}\coloneqq \sup_{n\geq 1}\|x_{n}\|_{p}<\infty.
\]

The following noncommutative Burkholder-Gundy inequality, due to Pisier and Xu \cite{PX1997}, is one of significant tools in the noncommutative martingale theory.
\begin{thm}\label{Nc-BG}
Let $(x_{n})_{n\geq 1}$ be a noncommutative martingale. Then, for each $n\in \mathbb{N}$, we have
\begin{equation*}
 \|x_{n}\|_{L_{p}(\M)}\asymp_{p} \inf_{dx_k=dy_k+dz_k}\left\{\left\|\left(\sum_{k=1}^n|dy_k|^2\right)^{1/2}\right\|_p+\left\|\left(\sum_{k=1}^n|dz_k^*|^2\right)^{1/2}\right\|_p\right \}
\end{equation*}
for $1<p<2$, and 
\begin{equation*}
 \|x_{n}\|_{L_{p}(\M)}\asymp_{p} \max\left\{\left\|\left(\sum_{k=1}^n|dx_k|^2\right)^{1/2}\right\|_{p},\left\|\left(\sum_{k=1}^n|dx_k^*|^2\right)^{1/2}\right\|_{p}\right\},\quad 2\leq p<\infty.
\end{equation*}
\end{thm}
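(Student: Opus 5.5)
This is the Pisier–Xu noncommutative Burkholder–Gundy inequality, quoted from \cite{PX1997}. The proof I would follow is theirs: first handle $2\le p<\infty$, then get $1<p<2$ by duality.

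\textbf{Case $2\le p<\infty$, lower estimate.} The lower estimate $\max\{\|S_c\|_p,\|S_r\|_p\}\le c_p\|x_n\|_p$, with $S_c=(\sum|dx_k|^2)^{1/2}$ and $S_r=(\sum|dx_k^*|^2)^{1/2}$, comes from noncommutative Khintchine. One has $\|\sum \varepsilon_k dx_k\|_{L_p(L_\infty(\Omega)\bar\otimes\M)}\asymp\max\{\|S_c\|_p,\|S_r\|_p\}$ for $p\ge 2$. It therefore suffices to show that martingale transforms by signs are bounded on $L_p(\M)$, i.e.\ $\|\sum\varepsilon_kdx_k\|_p\le c_p\|\sum dx_k\|_p$ uniformly in signs.

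\textbf{Case $2\le p<\infty$, upper estimate.} Expand $\|x_n\|_p^2=\|x_n^*x_n\|_{p/2}$ and write
\[
x_n^*x_n=\sum_k|dx_k|^2+\sum_{j<k}(dx_j^*dx_k+dx_k^*dx_j).
\]
Set $y_k=\sum_{j<k}dx_j$. The cross term is $\sum_k(y_k^*dx_k+dx_k^*y_k)$, which is again a martingale difference sequence, so
\[
\|x_n\|_p^2\le \|S_c\|_p^2+2\Big\|\sum_k y_k^*dx_k\Big\|_{p/2}.
\]
For $p\ge4$, apply the induction hypothesis at exponent $p/2$, i.e.\ the upper estimate with Hardy-space norms, to the martingale $\sum_k y_k^*dx_k$. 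Its column square function is $(\sum dx_k^*y_ky_k^*dx_k)^{1/2}$, bounded by $\sup_k\|y_k\|\cdot S_c$ in a suitable sense. Making this sense precise requires Doob's inequality (Junge), or better the bound $\sum dx_k^*y_ky_k^*dx_k\le$ something controlled via Hölder. Its row square function is $(\sum y_k^*|dx_k^*|^2y_k)^{1/2}$, handled similarly.

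Both pieces reduce to products of $\|x\|_p$ and square-function norms. Solving the resulting quadratic inequality $A^2\le B^2+cAB$ gives $A\lesssim B$. One runs the induction on $p\in[2^m,2^{m+1}]$ and interpolates between dyadic exponents. Pisier–Xu avoid Doob by a cleverer estimate: they use the dual space and the inequality $\|\sum a_k^*b_k\|_{p/2}\le\|(\sum|a_k|^2)^{1/2}\|_p\|(\sum|b_k|^2)^{1/2}\|_p$ applied to the triangular cross term.

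\textbf{Main obstacle.} The main obstacle is exactly this triangular term. The off-diagonal sum must be controlled without a commutative stopping-time argument, and I would follow Pisier–Xu's duality/triangular projection bound there.

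\textbf{Case $1<p<2$.} Use duality with $p'>2$. Take $\mathcal H_p^c+\mathcal H_p^r$ with the infimum norm; its dual is $\mathcal H_{p'}^c\cap\mathcal H_{p'}^r$, identified via the pairing $\tau(\sum dx_k^*dy_k)=\tau(x_n^*y_n)$. The estimate $\|x_n\|_p\lesssim\inf\{\cdots\}$ is easy: $\|\sum dy_k\|_p\le$ (via the $p'$-case dual bound and Cauchy–Schwarz in the column space) $c\|S_c(y)\|_p$, and similarly for rows. The converse follows from Hahn–Banach. A norming functional for $x_n$ in the sum space is given by a martingale in the intersection space with norm at most $1$. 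The $p'$ upper estimate says this martingale lies in $L_{p'}$ with comparable norm, so $\|x\|_{\mathcal H_p^c+\mathcal H_p^r}\le c_{p'}\|x_n\|_p$. This step also requires the intersection space to embed complementably, which is where Stein's inequality (boundedness of $(a_k)\mapsto(\Ex_{k-1}a_k)$ on $L_p(\ell_2^c)$) enters.
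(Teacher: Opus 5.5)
The paper does not prove this statement: it is quoted verbatim from \cite{PX1997}, so your opening citation is all the paper itself uses. The reconstruction you add on top of it, however, is not a proof. It contains one circular step and one step that does not go through.

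\textbf{Circular step: the lower estimate for $p\ge 2$.} Reducing it, via noncommutative Khintchine, to uniform $L_p$-boundedness of the sign transforms $\sum_k\varepsilon_k dx_k$ is correct. But you give no argument for that boundedness, and in the noncommutative setting it is not an independent input. It is obtained as a consequence of the very Burkholder--Gundy inequality you are proving; this is how it appears in \cite{PX1997}. Independent routes exist only through considerably deeper tools, such as the weak type $(1,1)$ estimate of \cite{Ra2002}. So this step is circular as written. Your upper estimate for $1<p<2$ is deduced from it by duality, so that half of the theorem is unsupported as well. The same half also relies on Stein's inequality, which you name but do not prove.

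\textbf{Failing step: the cross term in the upper estimate.} Write $w=\sum_k y_k^*dx_k$.
\begin{itemize}
\item The column square function $\sum_k dx_k^*y_ky_k^*dx_k$ has the varying factor in the middle, so it can be handled. Kadison--Schwarz gives $y_ky_k^*\le\mathcal{E}_{k-1}(x_nx_n^*)$, after which duality and the dual Doob inequality apply.
\item The row square function $\sum_k y_k^*\,dx_kdx_k^*\,y_k$ is not ``handled similarly.'' Here $y_k$ sits on the outside. Kadison--Schwarz is unavailable because $dx_kdx_k^*\notin\M_{k-1}$.
\item Even granting Junge's Doob inequality, a factorization $y_k=u_ka$ with $\sup_k\|u_k\|_\infty\le 1$ leaves $\sum_k u_k^*B_ku_k$, where $B_k=dx_kdx_k^*$. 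This is not controlled by $\|\sum_k B_k\|_q$ for $q>1$. Take $B_k=e_{kk}$ and $u_k=e_{k1}$ in $M_N$: the left side is $Ne_{11}$, the right side is $I_N$, and the ratio of $q$-norms is $N^{1-1/q}$.
\end{itemize}
Since the upper estimate at an exponent $p/2>2$ genuinely involves $S_r(w)$, your induction does not get past $p=4$.

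\textbf{The ``cleverer estimate'' does not repair this.} Cauchy--Schwarz with $a_k=y_k$ produces the factor $\|(\sum_k|y_k|^2)^{1/2}\|_p$, which can be of order $\sqrt n\,\|x_n\|_p$. Rewriting the cross term as $\sum_{j<k}dx_j^*dx_k$ instead brings in the strictly upper triangular all-ones $n\times n$ matrix, whose norm is at least $(n-1)/2$. Either way a power of $n$ is lost.

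Net effect: a complete argument needs a non-circular direct proof of one estimate in each dual pair, namely $\|x\|_{H_p}\lesssim\|x\|_p$ versus $\|x\|_{p'}\lesssim\|x\|_{H_{p'}}$, and the reverse pair. It also needs Stein's inequality. Your sketch supplies neither direct proof.
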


\subsection{Noncommutative independence}
Elements in $L_0(\mathcal M)$ are called (noncommutative) random variables, and we say $x\in L_1(\mathcal M)$ is mean zero if $\tau(x)=0$.
Following \cite[Page 233]{JX2008},  we recall the noncommutative independence as follows.
\begin{defn}\label{def-ind}
	Let $(\mathcal M,\tau)$ be a noncommutative probability space. Assume that $\mathcal{N}$ and $(\mathcal{M}_k)_{k\geq0}$ are subalgebras of $\mathcal{M}$ such that $\mathcal{N}\subseteq \mathcal{M}_k$ for each $k.$ We further assume that there exist trace preserving normal conditional expectations $\mathcal{E}_{\mathcal{N}}:\M\to \mathcal{N}$ and $\mathcal{E}_{\mathcal{M}_{k}}:\M\to \mathcal{M}_{k}$ for all $k\geq0$.
	\begin{enumerate}[(i)]
		\item We say that a sequence $(\mathcal M_k)_{k\geq0}$ of von Neumann subalgebras in $\mathcal M$ are independent  with respect to $\mathcal{E}_{\mathcal{N}}$ (the conditional expectation from $\mathcal{M}$ to $\mathcal{N}$), $\mathcal{E}_{\mathcal{N}}(xy)=\mathcal{E}_{\mathcal{N}}(x)\mathcal{E}_{\mathcal{N}}(y)$ holds for every all $x\in \mathcal M_k$  and for every $y$ in the von Neumann algebra generated by $(\mathcal M_j)_{j\neq k}$.
		\item A sequence $(x_k)_{k\geq0}\subseteq L_0(\mathcal M)$ is said to be independent with respect to $\mathcal{E}_{\mathcal{N}}$, if the unital von Neumann subalgebras $\mathcal M_k$, $k\geq0$, generated by $x_k$ are independent with respect to $\mathcal{E}_{\mathcal{N}}$.
		\item A sequence $(x_k)_{k\geq0}\subseteq L_0(\mathcal M)$ is said to be independent, if it is independent  with respect to $\tau$.
	\end{enumerate}
\end{defn}

\begin{rem}\label{re1}
Let $(x_k)_{k\geq0}\subseteq L_{1}(\M)$ be a sequence, which is  independent with respect to $\mathcal{E}_{\mathcal{N}}$ such that $\mathcal{E}_{\mathcal{N}}(x_k)=0$ for each $k\geq0$. For each $k\geq 0$, denotes by $\mathcal{A}_k:=vN(x_{0},\dots,x_{k})$ the von Neumann subalgebras generated by $(x_{j})_{j=0}^{k}$. Then \cite[Lemma 1.2 and Remark 1.1]{JX2008} yields that
\[
\mathcal{E}_{\mathcal{A}_{k-1}}(x_k)=\mathcal{E}_{\mathcal{N}}(x_k)=0,\quad k\geq0,
\]
where  $\mathcal{E}_{\mathcal{A}_{k-1}}$ is the conditional expectation from $\mathcal{M}$ onto $\mathcal{A}_{k-1}$. Hence, $(x_k)_{k\geq0}$ forms a martingale differences with respect to the filtration $(\mathcal{A}_k)_{k\geq0}$.
\end{rem}

\subsection{Noncommutative Azuma inequalities }

The following Azuma inequality for noncommutative martingale is due to Sadeghi and Moslehian \cite{SM2014}, which serves as one of the basic tools in our study of deviation inequalities.

\begin{thm}[Sadeghi-Moslehian]\label{Nc-Azuma}
Let $(x_{j})_{j=1}^{n}$ be a self-adjoint martingale such that $\|dx_{j}\|_{\infty}\leq c_{j}$ for each $j$. Then we have
\[
\tau(\mathds{1}_{(r, \infty)}(|x_{n}|))\leq 2\exp\left\{\frac{-r^{2}}{2\sum_{j=1}^{n}c_j^2}\right\},~\mbox{for}~r>0,~n\in \N.
\]
\end{thm}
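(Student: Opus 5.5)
The plan is to follow the classical exponential-moment (Chernoff) argument for Azuma's inequality. The one genuinely noncommutative ingredient is the Golden--Thompson inequality, used together with the tower property of the conditional expectations. Write $x_n=\sum_{j=1}^n dx_j$, where each $dx_j$ is self-adjoint, lies in $\M_j$, and satisfies $\mathcal{E}_{j-1}(dx_j)=0$; we use the convention $\mathcal{E}_0=\tau$, so that $\tau(dx_1)=0$. Since $\tau(\mathds{1}_{(r,\infty)}(|x_n|))\le \tau(\mathds{1}_{(r,\infty)}(x_n))+\tau(\mathds{1}_{(r,\infty)}(-x_n))$ and $-x$ is again a martingale with the same bounds, it suffices to prove the one-sided estimate $\tau(\mathds{1}_{(r,\infty)}(x_n))\le \exp\{-r^2/(2\sum_j c_j^2)\}$.

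\textbf{Step 1 (Chernoff).} For every $\lambda>0$ we have the operator inequality $\mathds{1}_{(r,\infty)}(x_n)\le e^{-\lambda r}e^{\lambda x_n}$, which follows from functional calculus applied to the self-adjoint operator $x_n$. Taking traces gives
\[
\tau(\mathds{1}_{(r,\infty)}(x_n))\le e^{-\lambda r}\,\tau(e^{\lambda x_n}).
\]

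\textbf{Step 2 (peeling off one difference).} By Golden--Thompson, $\tau(e^{\lambda x_n})=\tau(e^{\lambda x_{n-1}+\lambda dx_n})\le \tau(e^{\lambda x_{n-1}}e^{\lambda dx_n})$. Because $e^{\lambda x_{n-1}}\in\M_{n-1}$ is positive, the module property of $\mathcal{E}_{n-1}$ gives
\[
\tau(e^{\lambda x_{n-1}}e^{\lambda dx_n})=\tau\bigl(e^{\lambda x_{n-1}}\mathcal{E}_{n-1}(e^{\lambda dx_n})\bigr)\le \bigl\|\mathcal{E}_{n-1}(e^{\lambda dx_n})\bigr\|_\infty\,\tau(e^{\lambda x_{n-1}}).
\]
The last inequality uses $\tau(ab)\le\|b\|_\infty\tau(a)$ for $a\ge0$ and $b$ self-adjoint. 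To control $\|\mathcal{E}_{n-1}(e^{\lambda dx_n})\|_\infty$, I apply the scalar convexity bound $e^{\lambda t}\le \cosh(\lambda c)+\frac{t}{c}\sinh(\lambda c)$, valid for $|t|\le c$, to the spectrum of $dx_n$ with $c=c_n$. This yields the operator inequality $e^{\lambda dx_n}\le \cosh(\lambda c_n)\mathbf{1}+\frac{\sinh(\lambda c_n)}{c_n}dx_n$. Since $\mathcal{E}_{n-1}$ is positive and $\mathcal{E}_{n-1}(dx_n)=0$, we obtain $0\le \mathcal{E}_{n-1}(e^{\lambda dx_n})\le \cosh(\lambda c_n)\le e^{\lambda^2c_n^2/2}$.

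\textbf{Step 3 (iteration and optimisation).} Iterating Step 2 down to $n=1$, where the first step uses $\tau(e^{\lambda dx_1})\le\cosh(\lambda c_1)$ by the same convexity argument, gives $\tau(e^{\lambda x_n})\le \exp\{\lambda^2\sum_j c_j^2/2\}$. Substituting into Step 1 and choosing $\lambda=r/\sum_j c_j^2$ yields the one-sided bound. Adding the bound for $-x_n$ produces the factor $2$.

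The main obstacle is Step 2, since $e^{\lambda x_{n-1}}$ and $e^{\lambda dx_n}$ do not commute. Golden--Thompson is exactly what allows us to split the exponential of the sum, and it is available for trace $\tau$ on a finite von Neumann algebra because all operators here are bounded. One could worry that iterating Golden--Thompson with more than two factors fails. This does not occur here, because at each stage we apply it only to two factors and then immediately absorb $\mathcal{E}_{n-1}(e^{\lambda dx_n})$ through the operator-norm bound, so no three-term trace inequality is ever needed.
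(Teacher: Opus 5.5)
The paper gives no proof of this statement; it simply quotes it from Sadeghi--Moslehian. Your argument is the standard Chernoff/Golden--Thompson proof, and each step is correct. It is the same peeling the paper runs in the proof of Theorem \ref{thm:ldi}, except that the module property of $\mathcal{E}_{n-1}$ and the chord bound $e^{\lambda t}\le\cosh(\lambda c)+\frac{t}{c}\sinh(\lambda c)$ replace independence.

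One caveat concerns your convention $\mathcal{E}_0=\tau$. The paper's definition only sets $dx_1=x_1$, with no centering, so the statement as written allows $\tau(x_1)\neq 0$. In that case your reduction to the one-sided bound fails: for $n=1$, $x_1=c_1\mathbf{1}$ and $0<r<c_1$ one has $\tau(\mathds{1}_{(r,\infty)}(x_1))=1>e^{-r^2/(2c_1^2)}$.

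The two-sided inequality nevertheless survives. Do not optimize the two tails separately. Instead keep a common $\lambda>0$, run your peeling for $x$ and for $-x$ down to $x_1$, and add:
\begin{align*}
\tau(\mathds{1}_{(r,\infty)}(|x_n|))
&\le e^{-\lambda r}\bigl[\tau(e^{\lambda x_n})+\tau(e^{-\lambda x_n})\bigr]\\
&\le 2e^{-\lambda r}\,\tau\bigl(\cosh(\lambda x_1)\bigr)\prod_{j=2}^{n}\cosh(\lambda c_j)
\le 2e^{-\lambda r}\prod_{j=1}^{n}\cosh(\lambda c_j).
\end{align*}
The last step uses $\cosh(\lambda x_1)\le\cosh(\lambda c_1)\mathbf{1}$. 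The choice $\lambda=r/\sum_j c_j^2$ then gives the claimed bound with no assumption on $\tau(x_1)$.
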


Combining the Cuculescu projections, we strengthen the noncommutative Azuma inequality as follows, which is of independent interest.

\begin{prop}\label{Nc-Azuma-max}
Suppose that $(x_{j})_{j=1}^{n}$ is a self-adjoint martingale such that $\|dx_{j}\|_{\M}\leq c_{j}$ for each $j$. Then, for each $\lambda>0$, there exist projections $q_N^{(\lambda)}$ satisfying
\[
\sup_{1\leq k\leq N}\|q_N^{(\lambda)} x_kq_{N}^{(\lambda)}\|_{\M}\leq \lambda,
\]
and
\[
\tau(1-q_N^{(\lambda)})\leq 2\exp\left\{\frac{-\lambda^2}{D\sum_{j=1}^Nc_j^2}\right\},
\]
for some universal constant $D>0$.
\end{prop}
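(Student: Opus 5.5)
The plan is to run Cuculescu's construction on the submartingale $(e^{t x_k})$ (and $(e^{-t x_k})$) and optimise the parameter $t$, as in the Hoeffding--Azuma argument. Fix $\lambda>0$ and $t>0$. Since $x_k$ is self-adjoint and bounded, $\exp(t x_k)$ is a positive operator. By operator convexity considerations we cannot directly assert that $(e^{tx_k})_k$ is a submartingale in the noncommutative setting. We therefore apply Cuculescu's construction to the positive martingale
\[
y_k^{(+)}:=\mathcal{E}_k\bigl(e^{t x_N}\bigr),\qquad 1\le k\le N,
\]
and similarly to $y_k^{(-)}:=\mathcal{E}_k(e^{-t x_N})$. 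Cuculescu's theorem gives decreasing projections $q_k^{\pm}\in\M_k$, commuting with $q_{k-1}^\pm y_k^\pm q_{k-1}^\pm$, such that $q_k^\pm y_k^\pm q_k^\pm\le e^{t\lambda}q_k^\pm$ and
\[
\tau(1-q_N^\pm)\le e^{-t\lambda}\tau(y_N^\pm)=e^{-t\lambda}\tau(e^{\pm t x_N}).
\]

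The trace bound then follows from the Sadeghi--Moslehian argument behind Theorem \ref{Nc-Azuma}. Golden--Thompson/Lieb iteration, or their direct estimate, gives $\tau(e^{\pm t x_N})\le \exp\bigl(t^2\sum_j c_j^2/2\bigr)$; if only the tail bound is available, I would integrate it via the distribution formula \eqref{eq:d-r}, which yields a bound $\le C\exp(C' t^2\sum c_j^2)$. Setting $q_N^{(\lambda)}=q_N^+\wedge q_N^-$ gives $\tau(1-q_N^{(\lambda)})\le \tau(1-q_N^+)+\tau(1-q_N^-)$. Choosing $t=\lambda/(D'\sum c_j^2)$ produces the Gaussian bound with some universal $D$; the loss of constants is absorbed into $D$, and the prefactor $2$ can be achieved by enlarging $D$ when the bound is below $1$ (otherwise the estimate is trivial).

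The main obstacle is converting control of $q\,\mathcal{E}_k(e^{tx_N})\,q$ into control of $\|q x_k q\|$. By operator Jensen (the conditional expectation is unital, completely positive, and $\exp$ is operator convex? No, it is only convex), this is not automatic. I would therefore use a cruder scheme to obtain the uniform bound over $k$. By the noncommutative Doob inequality of Junge, applied in the form of Cuculescu's weak-type estimate for the positive martingale $\mathcal{E}_k(|x_N|^p)$, one can take $p\sim \lambda^2/\sum c_j^2$, with moments controlled via the Azuma tail and \eqref{eq:d-r}: $\|x_N\|_p^p\le (C\sqrt{p\sum c_j^2})^p$. Cuculescu applied to $\mathcal{E}_k(|x_N|^p)$ at level $(\lambda/2)^p$ gives $q$ with $q\,\mathcal{E}_k(|x_N|^p)\,q\le(\lambda/2)^p q$ and $\tau(1-q)\le (2C\sqrt{p\sum c_j^2}/\lambda)^p$. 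Since $x_k=\mathcal{E}_k(x_N)$, Kadison--Schwarz and operator concavity of $s\mapsto s^{2/p}$ ($p\ge2$) give $|x_k|^2\le \mathcal{E}_k(|x_N|^2)\le \mathcal{E}_k(|x_N|^p)^{2/p}$. For $q\in\M_k$ commuting appropriately (using the Cuculescu property that $q_k$ commutes with $q_{k-1}y_kq_{k-1}$, and working with $q_N\le q_k$), this yields $\|q x_k q\|\le\lambda$. Optimising $p$ gives $\tau(1-q)\le e^{-\lambda^2/(D\sum c_j^2)}$. The delicate point, which I expect to be the hardest to check, is the commutation step passing from $q_k\,\mathcal{E}_k(\cdot)\,q_k\le\lambda^p q_k$ to a bound on $q_N |x_k|^2 q_N$. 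I would handle it via the spectral calculus of $q_k y_k q_k$ on the range of $q_k$, since $q_k$ is a spectral projection of $q_{k-1}y_kq_{k-1}$.
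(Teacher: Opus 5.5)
Your final scheme in the third paragraph is a valid proof along a genuinely different route from the paper's, but the step you flag as delicate needs a different tool from the one you name. The exponential argument of the first two paragraphs cannot give the first assertion, as you note, so the proof should rest entirely on the Cuculescu projections $q_k$ of the positive martingale $y_k=\mathcal{E}_k(|x_N|^p)$ at level $(\lambda/2)^p$.

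\textbf{The delicate step.} Passing from $q_k y_k q_k\le(\lambda/2)^p q_k$ to a bound on $q_N y_k^{2/p} q_N$ cannot be done by spectral calculus alone. The projection $q_k$ is a spectral projection of $q_{k-1}y_kq_{k-1}$, but it does not commute with $y_k$, hence not with $y_k^{2/p}$. What you need is Hansen's inequality $K^*f(A)K\le f(K^*AK)$, valid for $f\ge 0$ operator monotone on $[0,\infty)$ and $K$ a contraction; equivalently, Choi's Jensen inequality for the unital positive compression $a\mapsto q_kaq_k$ into $q_k\M q_k$. Apply it to $f(s)=s^{2/p}$ with $p\ge 2$:
\[
q_k y_k^{2/p}q_k\le (q_ky_kq_k)^{2/p}\le(\lambda/2)^2q_k .
\]
Combine this with $q_N\le q_k$ and your Kadison--Schwarz/Choi step $|x_k|^2\le y_k^{2/p}$ to get
\[
\|x_kq_N\|_{\M}^2=\|q_N|x_k|^2q_N\|_{\M}\le(\lambda/2)^2 .
\]
With this fix, and taking $q=0$ in the range where the optimal $p$ is below $2$ (there the claimed bound exceeds $1$ once $D$ is large), your argument is complete.

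\textbf{Comparison with the paper.} The paper never meets your obstacle, because it applies Cuculescu's construction to the self-adjoint martingale $x$ itself at level $\lambda$.
\begin{enumerate}
\item The uniform bound is then automatic: $q_Nx_kq_N=q_N(q_kx_kq_k)q_N$, and $|q_kx_kq_k|\le\lambda q_k$ by Proposition \ref{Cuculescu}(iii).
\item The transfer to $L_p$ happens at the level of traces rather than operators. Proposition \ref{Cuculescu}(iv) plus H\"older gives $\tau(1-q_N^{(\lambda)})\le\|x_N\|_p^p/\lambda^p$ for all $p\ge1$ at once (Lemma \ref{lem-e1}).
\item From there the proof is the same as yours: the Azuma-based moment bound $\|x_N\|_p\le K\sqrt{p\sum_j c_j^2}$, then the choice $p\sim\lambda^2/\sum_j c_j^2$.
\end{enumerate}
The paper's route is shorter, needs no operator-monotonicity arguments, and yields the canonical projection $q_N^{(\lambda)}$, which does not depend on $p$. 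Yours needs the Kadison--Schwarz, Choi and Hansen inequalities and a projection depending on $p$, hence on $\lambda^2/\sum_j c_j^2$. In exchange it gives the stronger one-sided estimate $\sup_k\|x_kq\|_{\M}\le\lambda/2$. It also never uses self-adjointness at the operator level, so with the dilated Azuma bound it works verbatim for non-self-adjoint martingales.
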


To prove this theorem, we first recall  the so-called Cuculescu projections associated to a given  martingale $(x_{j})_{j\geq 1}$ as follows. For a given self-adjoint martingale $(x_{j})_{j\geq 1}$, set $q_0^{(\lambda)}={\bf 1}$ and define inductively that
\[
q_n^{(\lambda)} := q_{n-1}^{(\lambda)}\mathds{1}_{[-\lambda,\lambda]} \Big(
q_{n-1}^{(\lambda)} x_{n} q_{n-1}^{(\lambda)} \Big).
\]

\begin{prop}[{\cite[Proposition~1.4]{PR2006}}]\label{Cuculescu}
For $\lambda\in \R$, let $\{q^{(\lambda)}_{n}\}_{n\geq 1}$ be the Cuculescu projections which satisfies the following properties: For each $n\geq 1$, we have that
\begin{enumerate} [{\rm (i)}]
\item  $q_{n}^{(\lambda)} \in \M_{n}$ and $(q_{n}^{(\lambda)})_{n\geq1}$ is decreasing;
\item  $q_{n}^{(\lambda)}$ commutes with $q_{n-1}^{(\lambda)} x_{n} q_{n-1}^{(\lambda)}$;
\item  $|q_{n}^{(\lambda)} x_{n}q_{n}^{(\lambda)}| \leq \lambda q_{n}^{(\lambda)}$;
\item
\[
\tau\left( {\mathbf{1}}-q^{(\lambda)}_{n}\right)\leq \frac{1}{\lambda}\tau\left(\left(\mathbf{1}-q^{(\lambda)}_{n}\right)|x_{n}|\right).
\]
\end{enumerate}
\end{prop}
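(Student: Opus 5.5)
The statement collects four properties of the projections $q_n^{(\lambda)}$ defined just above it. I will prove them in order, by induction on $n$ and elementary functional calculus; the martingale property and the $L_1$-contractivity of the conditional expectations $\mathcal{E}_k$ are used only in (iv). Throughout, $(x_n)$ is a self-adjoint martingale in $L_1(\M)$ and $\lambda>0$. Write $y_n:=q_{n-1}^{(\lambda)}x_nq_{n-1}^{(\lambda)}$ and $e_n:=\mathds{1}_{[-\lambda,\lambda]}(y_n)$, so that $q_n^{(\lambda)}=q_{n-1}^{(\lambda)}e_n$. I drop the superscript $(\lambda)$ below.

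For (i) and (ii), the key observation is that $q_{n-1}y_n=y_n=y_nq_{n-1}$, so $q_{n-1}$ commutes with $y_n$ and hence with every spectral projection of $y_n$, in particular with $e_n$. Therefore $q_n=q_{n-1}e_n$ is a product of two commuting projections. It is thus a projection, it satisfies $q_n\le q_{n-1}$, and it commutes with $y_n$, which is (ii). For the inductive step in (i), assume $q_{n-1}\in\M_{n-1}\subseteq\M_n$. Since also $x_n\in L_1(\M_n)$, we get $y_n$ affiliated with $\M_n$, so $e_n\in\M_n$ and hence $q_n\in\M_n$.

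For (iii), use $q_n\le q_{n-1}$ to write $q_nx_nq_n=q_ny_nq_n=y_nq_n=q_{n-1}\,(y_ne_n)$. Here $y_ne_n=\int_{[-\lambda,\lambda]}t\,de_t$, so $|y_ne_n|\le\lambda e_n$. Multiplying by the commuting projection $q_{n-1}$ gives $|q_nx_nq_n|\le\lambda q_n$.

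For (iv), decompose $\mathbf 1-q_n=\sum_{k=1}^n p_k$ with $p_k:=q_{k-1}-q_k=q_{k-1}\mathds{1}_{\R\setminus[-\lambda,\lambda]}(y_k)$. These are mutually orthogonal projections, each $p_k\in\M_k$, and each commutes with $y_k$.
\begin{enumerate}[(a)]
\item By functional calculus, $\lambda p_k\le p_k|y_k|=|p_ky_kp_k|$. Since $p_k\le q_{k-1}$, we have $p_ky_kp_k=p_kx_kp_k$, and therefore $\lambda\tau(p_k)\le\tau|p_kx_kp_k|$.
\item By the martingale property and $p_k\in\M_k$, $p_kx_kp_k=p_k\mathcal{E}_k(x_n)p_k=\mathcal{E}_k(p_kx_np_k)$. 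Since $\mathcal{E}_k$ is a contraction on $L_1$, this gives $\tau|p_kx_kp_k|\le\tau|p_kx_np_k|$.
\item Split $x_n=x_n^+-x_n^-$ into positive and negative parts. The triangle inequality in $L_1$ gives $\tau|p_kx_np_k|\le\tau(p_kx_n^+p_k)+\tau(p_kx_n^-p_k)=\tau(p_k|x_n|)$.
\end{enumerate}
Summing (a)–(c) over $k$ yields
\[
\lambda\tau(\mathbf 1-q_n)\le\sum_{k=1}^n\tau(p_k|x_n|)=\tau\big((\mathbf 1-q_n)|x_n|\big),
\]
which is (iv).

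The only delicate point is step (b), transporting $x_k$ to $x_n$. It works because $p_k$ lies in $\M_k$, which is exactly what (i) guarantees. The rest is routine functional calculus.
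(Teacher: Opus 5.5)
Your proof is correct. The paper gives no argument of its own here and simply quotes Parcet--Randrianantoanina (Proposition 1.4). What you wrote is the standard Cuculescu argument behind that citation: telescope $\mathbf 1-q_n=\sum_{k\le n}(q_{k-1}-q_k)$, bound $\lambda\tau(q_{k-1}-q_k)$ by $\|(q_{k-1}-q_k)x_k(q_{k-1}-q_k)\|_1$, and use $q_{k-1}-q_k\in\M_k$ to pass from $x_k$ to $x_n$.

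Your steps (b)--(c) are exactly the adaptation needed for self-adjoint rather than positive martingales: $L_1$-contractivity of $\mathcal{E}_k$ plus the splitting $x_n=x_n^+-x_n^-$ is what produces $|x_n|$ on the right-hand side. You were also right to take $\lambda>0$, since (iv) is meaningless for $\lambda\le 0$ despite the ``$\lambda\in\R$'' in the statement.
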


\begin{lem}\label{lem-e1}
For $1\leq p<\infty$, $\lambda>0$, let $(q^{(\lambda)}_{n})_{n\geq 1}$ be the Cuculescu projections associated with the $L_{p}$ self-adjoint martingale  $x=(x_n)_{n\geq1}$. Then we have
\[
\lambda \left(\tau\left(\mathbf{1}-q^{(\lambda)}_{N}\right)\right)^{1/p} \leq  \|x_{N}\|_{p},~\mbox{for}~N\in \N.
\]
\end{lem}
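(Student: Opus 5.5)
We will combine property (iv) of Proposition \ref{Cuculescu} with Hölder's inequality in $L_p(\M)$. The projection $\mathbf{1}-q_N^{(\lambda)}$ belongs to $\M_N$, so we can work entirely at level $N$ with the element $x_N$.

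First, by Proposition \ref{Cuculescu}(iv) applied with $n=N$,
\[
\lambda\,\tau\left(\mathbf{1}-q_N^{(\lambda)}\right)\leq \tau\left(\left(\mathbf{1}-q_N^{(\lambda)}\right)|x_N|\right).
\]
Write $e=\mathbf{1}-q_N^{(\lambda)}$, a projection, so that $e=e^2$. Using the trace property, the right-hand side equals $\tau(e|x_N|e)$. Since $|x_N|\geq 0$, this quantity is nonnegative and real.

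Next, apply Hölder's inequality with exponents $p$ and $p'=p/(p-1)$ (interpreting $p'=\infty$ when $p=1$):
\[
\tau(e|x_N|)\leq \|e\|_{p'}\,\||x_N|\|_p=\tau(e)^{1-1/p}\,\|x_N\|_p,
\]
because $\|e\|_{p'}=\tau(e)^{1/p'}$ for a projection and $\||x_N|\|_p=\|x_N\|_p$. Combining the two displays gives
\[
\lambda\,\tau(e)\leq \tau(e)^{1-1/p}\|x_N\|_p .
\]
If $\tau(e)=0$, the claim is trivial. Otherwise, dividing by $\tau(e)^{1-1/p}>0$ yields $\lambda\,\tau(e)^{1/p}\leq\|x_N\|_p$, which is the statement.

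The only point needing care is the legitimacy of (iv) when $x_N$ is an unbounded operator in $L_p$ rather than an element of $\M$. Here $e\in\M$ and $|x_N|\in L_1(\M)$, since the trace is a state and hence $L_p\subseteq L_1$. So $\tau(e|x_N|)$ is well defined, and Hölder's inequality holds in this generality. Beyond this, the argument is routine.
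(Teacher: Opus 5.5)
Your proposal is correct and follows the paper's argument: combine Proposition \ref{Cuculescu}(iv) with the bound $\tau(e|x_N|)\le \tau(e)^{1/p'}\|x_N\|_p$ for $e=\mathbf{1}-q_N^{(\lambda)}$, then divide by $\tau(e)^{1/p'}$. The only difference is cosmetic: the paper obtains that bound from the Fack--Kosaki inequality $\tau(e|x_N|)\le\int_0^1\mu_t(e)\mu_t(|x_N|)\,dt$ followed by the classical H\"older inequality, whereas you apply the noncommutative H\"older inequality directly.
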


\begin{proof} 
The case $p=1$ has been already obtained in Proposition \ref{Cuculescu}, hence, we only show the result for $1<p<\infty$.  	Note that, for every projection $e$,  $ \mu_t(e)=\mathds{1}_{[0,\tau(e)}(t)$, $t\geq0$.   Applying \cite[Theorem 4.2]{FK1986}, we get
\begin{align*}
\tau((\mathbf{1}-q_N^{(\lambda)})|x_N|)&\leq \int_0^1 \mu_t(\mathbf{1}-q_N^{(\lambda)})\mu_t(|x_N|)dt\\
&\leq \left(\int_0^1 \mu_t(|x_N|)^pdt\right)^{1/p} \left(\int_0^1 \mu_t(\mathbf{1}-q_N^{(\lambda)})^{p'}dt\right)^{1/{p^{\prime}}}\\
&=\|x_N\|_p  [\tau(\mathbf{1}-q_N^{(\lambda)})]^{1/{p'}},
\end{align*}
where $p'$ is the conjugate index of $p$. Combining Proposition \ref{Cuculescu}(iv), we have
	$$\lambda\T ( {\bf 1}-q_N^{(\lambda)}  )
	\leq    \Big(\tau(\mathbf{1}-q_N^{(\lambda)})\Big)^{\frac 1{p'}}\|x_N\|_{p },$$
	which implies the desired result.
\end{proof}

We now prove Proposition \ref{Nc-Azuma-max} with full details.
\begin{proof}[Proof of Proposition \ref{Nc-Azuma-max}]
For every $\lambda>0$, let $(q_{n}^{(\lambda)})_{n= 1}^N$ be the Cuculescu projections constructed as in Proposition \ref{Cuculescu} associated with the self-adjoint  martingale $x=(x_{n})_{n=1}^N$. Then, using Lemma \ref{lem-e1}, for each $p\geq 1$,
\begin{equation}\label{esti 4}
\tau \left(\mathbf{1}-q_N^{(\lambda)}\right)\leq \frac{\|x_N\|_{p}^{p}}{\lambda^{p}}.
\end{equation}
By scaling we assume without loss of generality that $\sum_{j=1}^{N}c^{2}_{j}\leq 1$. By \eqref{eq:d-r} and Theorem \ref{Nc-Azuma}, we obtain
\begin{equation}\label{esti 5}
\begin{aligned}
\|x_N\|_p^{p}&=p\int_0^{\infty}\lambda^{p-1} \tau[\mathds{1}_{(\lambda,\infty)}(|x|)] d\lambda\\
&\leq 2p\int_0^{\infty}\lambda^{p-1}e^{-\lambda^{2}/2}d\lambda \\
&\leq K^{p}p^{p/2},
\end{aligned}
\end{equation}	
for some universal constant $K>0$. Combining with \eqref{esti 4} and \eqref{esti 5} together, we have
\[
\tau \left(1-q_N^{(r)}\right)\leq\inf\limits_{p\geq 1}\frac{\|x\|_{p}^{p}}{\lambda^{p}}\leq\inf\limits_{p\geq 1}\frac{K^{p}p^{p/2}}{\lambda^{p}}=\inf\limits_{p\geq 1}\exp\left\{p\log\left\{\frac{K\sqrt{p}}{\lambda}\right\}\right\}.
\]
	If $\lambda\geq eK$, then, choosing $p=\left(\frac{\lambda}{eKS_N}\right)^{2}\geq 1$, we have,
	\begin{equation*}
		\tau \left(1-q_N^{(\lambda)}\right)\leq \exp\left\{-\frac{\lambda^{2}}{e^{2}K^{2}}\right\}.
	\end{equation*}
 Choosing $D=2e^{2}K^{2}$ we obtain the desired inequality.
\end{proof}

We conclude this subsection with the following well-known result whose proof is completely analogous to the classical setting (see \cite{Ve2018}).

\begin{prop}\label{Lpsi random variables}
Suppose that $(\M,\tau)$ is a tracial von Neumann algebra and $\alpha\in[1,\infty)$. The following statements are equivalent.
\begin{enumerate}[{\rm (i)}]
\item There exists $K>0$ such that $\|x\|_{p}\leq Kp^{1/\alpha}$ for all $p\geq 1$.
\item There exists $c>0$ such that $\tau\left(e^{c|x|^{\alpha}}\right)<\infty$.
\item There exists $d>0$ such that $\tau\left(\mathds{1}_{(r,\infty)}(|x|)\right)\leq e^{-dr^{\alpha}}$ for every $r>0$.
\end{enumerate}
\end{prop}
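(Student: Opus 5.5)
The plan is to run the classical cycle (i)$\Rightarrow$(ii)$\Rightarrow$(iii)$\Rightarrow$(i). Everything passes through the positive operator $|x|$ and its distribution function $F(r)=\tau(\mathds{1}_{(r,\infty)}(|x|))$. By \eqref{eq:d-r}, or equivalently by functional calculus, $\tau(f(|x|))=\int_0^1 f(\mu(t,x))\,dt$ for nonnegative increasing $f$. Hence each quantity is a function of the decreasing rearrangement $\mu(\cdot,x)$ on $[0,1]$. The statement then reduces to the commutative one for the random variable $\mu(\cdot,x)$ on the probability space $[0,1]$, which I would nonetheless write out directly.

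\emph{(i)$\Rightarrow$(ii).} Expand the exponential by functional calculus and the normality of $\tau$ (monotone convergence):
\[
\tau\bigl(e^{c|x|^\alpha}\bigr)=\sum_{k\ge0}\frac{c^k\tau(|x|^{\alpha k})}{k!}.
\]
For $k\ge1$ we have $\alpha k\ge1$, so (i) gives $\tau(|x|^{\alpha k})=\|x\|_{\alpha k}^{\alpha k}\le K^{\alpha k}(\alpha k)^{k}$. By Stirling, $k^k/k!\le e^k$, so the $k$-th term is at most $(cK^\alpha\alpha e)^k$. The series is therefore finite once $c<(eK^\alpha\alpha)^{-1}$.

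\emph{(ii)$\Rightarrow$(iii).} Spectral calculus gives $e^{cr^\alpha}\mathds{1}_{(r,\infty)}(|x|)\le e^{c|x|^\alpha}$, which is the Chebyshev bound
\[
F(r)\le M e^{-cr^\alpha},\qquad M:=\tau\bigl(e^{c|x|^\alpha}\bigr).
\]
The stated form without a constant needs one more step. For $r\ge r_0$ with $Me^{-cr_0^\alpha/2}\le1$, we get $F(r)\le e^{-cr^\alpha/2}$. For $r<r_0$ we use only $F(r)\le\tau(\mathbf 1)=1$, and this is the step I expect to be delicate. The bound $e^{-dr^\alpha}$ is strictly less than $1$ for $r>0$, so small $r$ must be handled honestly. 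If $\tau(\mathbf 1-\mathds{1}_{\{0\}}(|x|))=1$, i.e.\ $F(0^+)=1$, then (iii) cannot hold literally near $0$. I would therefore read (iii), as in the classical reference \cite{Ve2018}, with a multiplicative constant, $F(r)\le 2e^{-dr^\alpha}$, or equivalently for $r$ beyond some threshold. Taking $d=\min\{c/2,\log 2/r_0^\alpha\}$ then makes $2e^{-dr^\alpha}\ge1$ on $(0,r_0)$, and the bound holds for all $r$.

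\emph{(iii)$\Rightarrow$(i).} Use \eqref{eq:d-r}:
\[
\|x\|_p^p=p\int_0^\infty\lambda^{p-1}F(\lambda)\,d\lambda\le 2p\int_0^\infty\lambda^{p-1}e^{-d\lambda^\alpha}d\lambda=\frac{2p}{\alpha}d^{-p/\alpha}\Gamma(p/\alpha).
\]
Since $\Gamma(p/\alpha)\le (p/\alpha)^{p/\alpha}$ for $p/\alpha\ge1$ (and is bounded otherwise), taking $p$-th roots gives $\|x\|_p\le C\,d^{-1/\alpha}(p/\alpha)^{1/\alpha}(2p/\alpha)^{1/p}$. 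The factor $(2p/\alpha)^{1/p}$ is uniformly bounded in $p\ge1$, so $\|x\|_p\le Kp^{1/\alpha}$. This is the same computation as \eqref{esti 5}, which is the case $\alpha=2$.

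The only genuinely noncommutative input is the identity $\tau(f(|x|))=\int f(\mu)$ together with \eqref{eq:d-r}, and both are already available. The main obstacle is therefore not analytic. It is the normalization issue at small $r$ in (iii), which I would resolve by allowing an absolute constant, as described above.
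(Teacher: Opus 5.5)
Your proof is correct and runs the same cycle as the paper's: Taylor expansion with Stirling for (i)$\Rightarrow$(ii), Chebyshev for (ii)$\Rightarrow$(iii), and the distribution formula \eqref{eq:d-r} for (iii)$\Rightarrow$(i). You are also right that (iii) needs a multiplicative constant such as $2e^{-dr^\alpha}$, since Chebyshev only gives $\tau(\mathds{1}_{(r,\infty)}(|x|))\le \tau(e^{c|x|^\alpha})e^{-cr^\alpha}$ and $x=\mathbf{1}$ satisfies (i) and (ii) but violates the literal (iii) for $0<r<1$; however, your diagnostic $F(0^+)=1$ is not the real obstruction. For example, $x(t)=t$ in $L_\infty[0,1]$ has $F(0^+)=1$ yet $F(r)=(1-r)_+\le e^{-r}$, so the literal (iii) holds with $\alpha=d=1$; what actually breaks it is too little spectral mass of $|x|$ near $0$, as when $|x|\ge\delta\mathbf{1}$ for some $\delta>0$.
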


\begin{proof}
``(i) $\Rightarrow$ (ii)'' follows from the Taylor expansion and the Stirling formula $k!\asymp\frac{k^{k}}{e^{k}}\sqrt{2\pi k}$ (as $k\to \infty$). ``(ii) $\Rightarrow$ (iii)'' follows from the Chebyshev inequality. ``(iii) $\Rightarrow$ (i)'' follows from \eqref{eq:d-r}.
\end{proof}

\section{Large deviation inequalities for sums of noncommutative independent random variables}\label{sec:ldi}

This section is devoted to extending \eqref{eq:ldi} to the noncommutative setting, which characterizes the exponential integrability of noncommutative independent sequences via deviation inequalities. We first recall the famous Golden-Thompson inequality, which serves as one of the fundamental tools in establishing noncommutative deviation inequalities.

\begin{thm}[Golden-Thompson]\label{Golden-Thompson inequality}
For self-adjoint elements $x$ and $y$ in $L_0(\mathcal{M})$, we have
\begin{equation*}
\tau\left(e^{x+y}\right)\leq \tau\left(e^{x} e^{y}\right). 
\end{equation*}
\end{thm}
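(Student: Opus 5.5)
The plan is to prove the statement first for bounded $x,y\in\M$, using the Lie--Trotter product formula together with an iterated trace inequality. The unbounded $\tau$-measurable case then follows by approximation, and this step is where the only real difficulty lies.

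\emph{Step 1: a trace inequality for positive operators.} Let $A,B\in\M$ be positive and $m\in\N$. Set $X=AB$. Since $AB$ has the same nonzero spectral behaviour as $A^{1/2}BA^{1/2}$, cyclicity gives
\[
\tau\big((AB)^{2m}\big)=\tau\big((A^{1/2}BA^{1/2})^{2m}\big)\geq 0 .
\]
Next, by the noncommutative Hölder inequality,
\[
|\tau(X^{2m})|\leq \|X^{2m}\|_1\leq \|X\|_{2m}^{2m}=\tau\big((XX^*)^m\big).
\]
Here $XX^*=AB^2A$, so cyclicity gives $\tau\big((AB^2A)^m\big)=\tau\big((A^2B^2)^m\big)$. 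Together these yield
\[
\tau\big((AB)^{2m}\big)\leq \tau\big((A^2B^2)^m\big).
\]
Applying this $k$ times, each time with $A^2,B^2$ in place of $A,B$ and with $m$ halved, we get
\[
\tau\big((AB)^{2^k}\big)\leq \tau\big(A^{2^k}B^{2^k}\big).
\]

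\emph{Step 2: the bounded case.} Take $A=e^{x/2^k}$ and $B=e^{y/2^k}$. Then Step 1 gives
\[
\tau\big((e^{x/2^k}e^{y/2^k})^{2^k}\big)\leq \tau(e^xe^y).
\]
For $x,y\in\M$, the Lie--Trotter formula states that $(e^{x/n}e^{y/n})^n\to e^{x+y}$ in operator norm, and $\tau$ is norm continuous because it is a state. Letting $k\to\infty$ along $n=2^k$ therefore gives the inequality for bounded $x,y$.

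\emph{Step 3: unbounded $x,y$ (main obstacle).} Here the right-hand side may be $+\infty$, in which case there is nothing to prove, so assume it is finite. Sums and exponentials are understood in $L_0(\M)$. The approximation is done in two stages, chosen so that monotonicity or norm convergence is available at each stage.

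First, for $x,y$ bounded above, put $x_m=\max(x,-m)$ and $y_m=\max(y,-m)$, which are bounded. We have $\|e^{x_m}-e^{x}\|_\M\leq e^{-m}$, and similarly for $y$, so $\tau(e^{x_m}e^{y_m})\to\tau(e^xe^y)$. Moreover $x_m+y_m\to x+y$ in measure, and $e^{x_m+y_m}$ stays bounded by $e^{\sup x+\sup y}$. Continuity of the bounded continuous functional calculus in measure then gives $\tau(e^{x_m+y_m})\to\tau(e^{x+y})$.

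Second, for general $x,y$, put $x^{(n)}=\min(x,n)$ and $y^{(n)}=\min(y,n)$. These satisfy $e^{x^{(n)}}\uparrow e^x$ and $e^{y^{(n)}}\uparrow e^y$. Writing
\[
\tau\big(e^{x^{(n)}}e^{y^{(n)}}\big)=\big\|e^{x^{(n)}/2}e^{y^{(n)}/2}\big\|_2^2,
\]
I expect monotone convergence (normality of $\tau$) to give convergence to $\tau(e^xe^y)$, although this step needs some care. On the left-hand side, $x^{(n)}+y^{(n)}\to x+y$ in measure, so $e^{x^{(n)}+y^{(n)}}\to e^{x+y}$ in measure. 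The Fatou property of $\tau$ (lower semicontinuity under convergence in measure) then gives
\[
\tau(e^{x+y})\leq\liminf_{n\to\infty}\tau\big(e^{x^{(n)}+y^{(n)}}\big)\leq \lim_{n\to\infty}\tau\big(e^{x^{(n)}}e^{y^{(n)}}\big)=\tau(e^xe^y),
\]
which completes the proof.
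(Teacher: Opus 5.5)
The paper gives no proof of this statement. It recalls it as the classical Golden--Thompson inequality and uses it as a black box in the proof of Theorem~\ref{thm:ldi}, with $x=\lambda S_{n-1}$ and $y=\lambda d_n$.

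Your argument is a correct, self-contained proof along the classical route. You prove the trace inequality $\tau((AB)^{2m})\le\tau((A^2B^2)^m)$ from H\"older and cyclicity, iterate it, pass to the limit with the Lie--Trotter formula, and then use a two-stage truncation to reach $\tau$-measurable operators. Compared with the bare citation, Step~3 is the real gain. The paper needs the inequality for unbounded operators, since the $d_j$ there are only exponentially integrable, and bounded-operator versions do not literally cover that case. Your truncation supplies exactly what the paper leaves implicit.

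Three small points on Step~3.

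\emph{Meaning of the right-hand side.} For unbounded $x,y$ you should state explicitly what $\tau(e^xe^y)$ means. The natural choice is $\tau(e^{y/2}e^xe^{y/2})=\|e^{x/2}e^{y/2}\|_2^2\in[0,\infty]$.

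\emph{The hedged step.} The step where you write ``I expect monotone convergence\dots'' needs no monotone convergence. You only need an upper bound, and it follows from $e^{x^{(n)}}\le e^x$, $e^{y^{(n)}}\le e^y$ and $\tau(c^*c)=\tau(cc^*)$ for $c\in L_0(\M)$:
\[
\tau\bigl(e^{y^{(n)}/2}e^{x^{(n)}}e^{y^{(n)}/2}\bigr)\le\tau\bigl(e^{y^{(n)}/2}e^{x}e^{y^{(n)}/2}\bigr)=\tau\bigl(e^{x/2}e^{y^{(n)}}e^{x/2}\bigr)\le\tau\bigl(e^{x/2}e^{y}e^{x/2}\bigr)=\tau\bigl(e^{y/2}e^{x}e^{y/2}\bigr).
\]
Combined with your Fatou step, this finishes the proof. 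Fatou applied to $e^{x^{(n)}/2}e^{y^{(n)}/2}\to e^{x/2}e^{y/2}$ in measure gives the reverse inequality, so the limit is indeed $\tau(e^xe^y)$, as you expected.

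\emph{Continuity in measure.} You use continuity in measure of $a\mapsto e^{a}$ on self-adjoint elements in both stages. This is standard but deserves a line of justification. Approximate $\exp$ by a polynomial $p$ uniformly on a large compact interval. Use the fact that $L_0(\M)$ is a topological algebra to get $p(a_n)\to p(a)$ in measure. Then control the spectral tails $\tau(\mathds{1}_{(R,\infty)}(|a_n|))$ uniformly in $n$ through convergence in measure.
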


We now turn to the \emph{Hermitian dilation argument} which enable one to reduce problems to the self-adjoint cases. Consider the algebra $(\mathbb{M}_{2\times 2}, \bar{\mathrm{tr}})$, where $\bar{\mathrm{tr}}$ is the normalized trace. 
For $x\in L_0(\mathcal{M})$ with $\tau(x)=0$, define a mapping $\mathcal{J}:\mathcal{M}\to \mathcal{M}\otimes\mathbb{M}_{2\times 2}$ by setting
\begin{equation}\label{eq:def J}
\mathcal{J}(x)=
\begin{pmatrix}
0&x\\
x^{*}&0
\end{pmatrix}.
\end{equation}
It is clear that $\mathcal{J}(x)$ is self-adjoint. The following two auxiliary lemmas will be used in the Hermitian dilation argument.

\begin{lem}\label{lem:dist}
For each $x\in L_0(\mathcal{M})$, we have 
$$	\tau\otimes\bar{\mathrm{tr}}\left[\mathds{1}_{[r,\infty)}(|\mathcal{J}(x)|)\right]=\tau\left[\mathds{1}_{[r,\infty)}(|x|)\right].$$
\end{lem}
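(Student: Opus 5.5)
The plan is to compute $|\mathcal{J}(x)|$ explicitly and then read off its spectral projections. Since $\mathcal{J}(x)$ is self-adjoint,
\[
|\mathcal{J}(x)|^2=\mathcal{J}(x)^*\mathcal{J}(x)=\begin{pmatrix} xx^* & 0\\ 0 & x^*x\end{pmatrix}.
\]
The continuous functional calculus (the square root) respects block-diagonal decompositions. Hence
\[
|\mathcal{J}(x)|=\begin{pmatrix} |x^*| & 0\\ 0 & |x|\end{pmatrix}.
\]
For unbounded $\tau$-measurable $x$ I would justify this identity through the polar decomposition or the Cayley transform. Alternatively, it suffices to note that the diagonal operator on the right is positive and its square equals $|\mathcal{J}(x)|^2$; uniqueness of the positive square root in $L_0(\M\otimes\mathbb{M}_{2\times2})$ then gives the identity.

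Next, spectral projections of a block-diagonal self-adjoint operator are the block-diagonal operators of the spectral projections of the blocks. Therefore
\[
\mathds{1}_{[r,\infty)}(|\mathcal{J}(x)|)=\begin{pmatrix}\mathds{1}_{[r,\infty)}(|x^*|)&0\\0&\mathds{1}_{[r,\infty)}(|x|)\end{pmatrix}.
\]
Applying $\tau\otimes\bar{\mathrm{tr}}$, with the normalized trace contributing a factor $1/2$, yields
\[
\tfrac12\bigl(\tau[\mathds{1}_{[r,\infty)}(|x^*|)]+\tau[\mathds{1}_{[r,\infty)}(|x|)]\bigr).
\]

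The key remaining step, and the only nontrivial one, is the identity
\[
\tau[\mathds{1}_{[r,\infty)}(|x^*|)]=\tau[\mathds{1}_{[r,\infty)}(|x|)].
\]
I would derive it from the polar decomposition $x=u|x|$, where $u$ is a partial isometry with $u^*u$ equal to the support of $|x|$. This gives $|x^*|=u|x|u^*$. Consequently, for any Borel $B\subseteq(0,\infty)$,
\[
\mathds{1}_B(|x^*|)=u\,\mathds{1}_B(|x|)\,u^*.
\]
To verify this, note that $f(u|x|u^*)=uf(|x|)u^*$ holds for polynomials $f$ without constant term, and then extend by a monotone class or approximation argument. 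Since $r>0$, the set $[r,\infty)$ avoids $0$, so the formula applies. Traciality then gives
\[
\tau(u\,\mathds{1}_B(|x|)\,u^*)=\tau(\mathds{1}_B(|x|)\,u^*u)=\tau(\mathds{1}_B(|x|)),
\]
because $\mathds{1}_B(|x|)$ lies under the support projection $u^*u$. Substituting this into the averaged expression completes the proof.

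The expected obstacle is purely technical: handling unbounded $\tau$-measurable operators in the functional calculus identities. Since $\tau$ is a finite trace, this can be avoided by truncation. Replace $x$ by $x\,\mathds{1}_{[0,n]}(|x|)$, or argue through singular value functions using $\mu(t,x)=\mu(t,x^*)$, a fact available in \cite{FK1986}. Equality of distribution functions then follows directly from equality of singular values.
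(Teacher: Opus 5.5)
Your proposal is correct and follows the paper's proof. Both arguments write $\mathds{1}_{[r,\infty)}(|\mathcal{J}(x)|)$ as $\mathrm{diag}\bigl(\mathds{1}_{[r,\infty)}(|x^*|),\mathds{1}_{[r,\infty)}(|x|)\bigr)$, use that $|x|$ and $|x^*|$ have the same distribution, and average with the normalized trace; the only difference is that the paper cites \cite[Lemma 2.5(ii)]{FK1986} for $\tau[\mathds{1}_{[r,\infty)}(|x^*|)]=\tau[\mathds{1}_{[r,\infty)}(|x|)]$, whereas you derive it from the polar decomposition $|x^*|=u|x|u^*$ (the case $r\le 0$, which you skip, is trivial since both sides equal $1$).
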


\begin{proof}
Note that 
\begin{equation*}
	\mathds{1}_{[r,\infty)}(|\mathcal{J}(x)|)=
	\begin{pmatrix}
		\mathds{1}_{[r,\infty)}(|x^*|)&0\\
		0&\mathds{1}_{[r,\infty)}(|x|)
	\end{pmatrix}.
\end{equation*}
Also recall that it was shown in \cite[Lemma 2.5(ii)]{FK1986} that for each $x\in L_0(\mathcal{M})$,
\[
\tau\left[\mathds{1}_{[r,\infty)}(|x^{*}|)\right]=\tau\left[\mathds{1}_{[r,\infty)}(|x|)\right].
\]
Then we get
\begin{align*}
	\tau\otimes\bar{\mathrm{tr}}\left[\mathds{1}_{[r,\infty)}(|\mathcal{J}(x)|)\right]&=\frac{1}{2}\left(\tau\left[\mathds{1}_{[r,\infty)}(|x^{*}|)\right]+\tau\left[\mathds{1}_{[r,\infty)}(|x|)\right]\right)\\
	&=\tau\left[\mathds{1}_{[r,\infty)}(|x|)\right].
\end{align*}
\end{proof}

Immediately, from the above lemma, we have 
$$\mu(t,\mathcal{J}(x))=\mu(t,x),\quad \forall t>0.$$
Hence, by Lemma 2.5 (i) in \cite{FK1986}, we have
\begin{equation}\label{eq:infty}
\|\mathcal{J}(x)\|_{\mathcal{M}\bar{\otimes}\mathbb{M}_{2\times2}}=\lim_{t\to0}\mu(t,\mathcal{J}(x))=\lim_{t\to0}\mu(t,x)=\|x\|_{\mathcal{M}}.
\end{equation}

\begin{lem}\label{lem:ind and mart}
Let $(d_{j})_{j\geq 1}\subseteq\bigcap_{p\geq 1}L_{p}(\M)$, then we have following claim.
\begin{enumerate}[{\rm (i)}]
\item If $(d_{j})_{j\geq 1}$ are mean zero and independent (with respect to $\tau$), then $(\mathcal{J}(d_{j}))_{j\geq1}$ are self-adjoint and independent (with respect to $\tau\otimes \bar{\mathrm{tr}}$).
\item If  $ (d_{j})_{j\geq1}$ are martingale differences (with respect to $(\mathcal{M}_{j})_{j\geq1}$), then $(\mathcal{J}(d_{j}))_{j\geq1}$ are self-adjoint martingale differences (with respect to $(\mathcal{M}_{j}\bar{\otimes}\mathbb{M}_{2\times2})_{j\geq1}$).
\end{enumerate}
\end{lem}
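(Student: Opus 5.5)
The plan is to reduce the scalar-trace independence of the $\mathcal{J}(d_j)$ to the independence of the $d_j$ in $\M$. This is done entrywise on $2\times 2$ matrices through the slice map $\mathcal{E}:=\tau\otimes \mathrm{id}_{\mathbb{M}_{2\times 2}}$, which is a trace preserving conditional expectation onto $\mathbf{1}\otimes\mathbb{M}_{2\times 2}$ satisfying $\tau\otimes\bar{\mathrm{tr}}=\bar{\mathrm{tr}}\circ\mathcal{E}$. Self-adjointness of $\mathcal{J}(d_j)$ is immediate from \eqref{eq:def J}. Let $\mathcal{B}_j$ be the von Neumann algebra generated by $d_j$, which contains $d_j^*$. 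Write $\mathcal{A}_k:=vN(\mathcal{J}(d_k))$ and $\mathcal{A}_k':=vN(\mathcal{J}(d_j):j\neq k)$. The goal is to show
\[
\tau\otimes\bar{\mathrm{tr}}(XY)=\tau\otimes\bar{\mathrm{tr}}(X)\,\tau\otimes\bar{\mathrm{tr}}(Y),\qquad X\in\mathcal{A}_k,\ Y\in\mathcal{A}_k'.
\]

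\emph{Step 1 (structure).} I would compute the powers of $\mathcal{J}(d)$:
\[
\mathcal{J}(d)^{2m}=\begin{pmatrix}(dd^*)^m&0\\0&(d^*d)^m\end{pmatrix},\qquad \mathcal{J}(d)^{2m+1}=\begin{pmatrix}0&(dd^*)^m d\\(d^*d)^m d^*&0\end{pmatrix}.
\]
From these, $\mathcal{A}_k$ lies in the weak$^*$-closed span of a diagonal part $\{\mathrm{diag}(f(d_kd_k^*),f(d_k^*d_k))\}$ and an off-diagonal part. All entries lie in $\mathcal{B}_k$, so $\mathcal{A}_k\subseteq\mathcal{B}_k\bar\otimes\mathbb{M}_{2\times2}$. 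Similarly, every entry of an element of $\mathcal{A}_k'$ lies in $vN(\mathcal{B}_j:j\neq k)$. To pass from polynomials to the generated von Neumann algebras, I would use normality of $\tau\otimes\bar{\mathrm{tr}}$ together with Kaplansky density. Since only $d_j\in\bigcap_pL_p(\M)$ is assumed, I would apply this to bounded functional-calculus truncations and then pass to the limit.

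\emph{Step 2 (entrywise factorization).} Take $X=(a_{rs})$ and $Y=(b_{rs})$. Then
\[
\tau\otimes\bar{\mathrm{tr}}(XY)=\tfrac12\sum_{r,s}\tau(a_{rs}b_{sr}).
\]
Since the $d_j$ are independent with respect to $\tau$, each term factorizes as $\tau(a_{rs})\tau(b_{sr})$. This gives $\tau\otimes\bar{\mathrm{tr}}(XY)=\bar{\mathrm{tr}}(\mathcal{E}(X)\mathcal{E}(Y))$. In other words, the families $\mathcal{A}_k$ are independent with respect to $\mathcal{E}$, in the sense of Definition \ref{def-ind}(i) with $\mathcal{N}=\mathbf{1}\otimes\mathbb{M}_{2\times2}$.

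\emph{Step 3 (from $\mathcal{E}$ to the scalar trace; the main obstacle).} It remains to check that
\[
\bar{\mathrm{tr}}(\mathcal{E}(X)\mathcal{E}(Y))=\bar{\mathrm{tr}}(\mathcal{E}(X))\,\bar{\mathrm{tr}}(\mathcal{E}(Y)).
\]
For the diagonal parts this works, since $\tau((dd^*)^m)=\tau((d^*d)^m)$ by traciality; hence $\mathcal{E}$ maps them to scalars. For the off-diagonal parts I would need $\tau((d_kd_k^*)^md_k)=0$ for all $m$, which makes $\mathcal{E}$ vanish on the off-diagonal part of $\mathcal{A}_k$. For $m=0$ this is exactly the mean-zero hypothesis. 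For higher $m$ I would try to extract it from the mean-zero assumption together with the way the statement is applied later, namely via truncations and symmetry. If that fails, I would settle for the conditional version of Step 2. That version is exactly what feeds Remark \ref{re1}, since it gives $\mathcal{E}(\mathcal{J}(d_j))=\mathcal{J}(\tau(d_j))=0$.
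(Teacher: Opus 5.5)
Your Step 3 is a genuine gap, and it cannot be closed: the vanishing $\tau\bigl((d_kd_k^*)^m d_k\bigr)=0$ for $m\geq 1$ does not follow from $\tau(d_k)=0$, and without it part (i) is false. The commutative case already shows this. Let $d_1,d_2$ be independent copies of a real random variable with $\mathbb{P}(d=2)=1/3$ and $\mathbb{P}(d=-1)=2/3$, so $\E d=0$ and $\E d^3=2$. Then $\mathcal{J}(d_j)^3=\begin{pmatrix}0&d_j^3\\ d_j^3&0\end{pmatrix}$, so $\tau\otimes\bar{\mathrm{tr}}(\mathcal{J}(d_j)^3)=0$. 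On the other hand, $\mathcal{J}(d_1)^3\mathcal{J}(d_2)^3=\mathrm{diag}(d_1^3d_2^3,\,d_1^3d_2^3)$ has $\tau\otimes\bar{\mathrm{tr}}$ equal to $\E[d_1^3]\,\E[d_2^3]=4$. Hence $vN(\mathcal{J}(d_1))$ and $vN(\mathcal{J}(d_2))$ are not independent with respect to $\tau\otimes\bar{\mathrm{tr}}$. No truncation or symmetry argument can rescue (i) without an extra hypothesis such as $\tau(h(d_jd_j^*)d_j)=0$ for all bounded Borel $h$, which is exactly the vanishing you isolated.

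The paper's proof has the same hole. It claims both sides vanish as soon as one of $m,n$ is odd. But for $m=2a+1$, $n=2b+1$ the product of the two off-diagonal matrices is diagonal, with trace $\frac12\bigl[\tau((d_kd_k^*)^ad_k)\,\tau((d_j^*d_j)^bd_j^*)+\tau((d_k^*d_k)^ad_k^*)\,\tau((d_jd_j^*)^bd_j)\bigr]$, and this need not vanish once $a,b\geq 1$. So you located the difficulty precisely; the right conclusion is that (i) has to be weakened, not that more work will prove it.

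What your Step 2 does establish is the correct substitute. Entrywise, $(\tau\otimes\mathrm{id})(XY)=(\tau\otimes\mathrm{id})(X)\,(\tau\otimes\mathrm{id})(Y)$, so the amplified algebras $\mathcal{B}_k\bar\otimes\mathbb{M}_{2\times2}$ are independent with respect to $\tau\otimes\mathrm{id}$. You need these amplified algebras, not $vN(\mathcal{J}(d_k))$: the latter is abelian and cannot contain $\mathbf{1}\otimes\mathbb{M}_{2\times2}$, as Definition~\ref{def-ind} requires. This is not statement (i), but it is what the Golden--Thompson step in Theorem~\ref{thm:ldi} can actually use, via
\[
\tau\otimes\bar{\mathrm{tr}}(e^{\lambda S_{n-1}}e^{\lambda\widetilde d_n})\le\tau\otimes\bar{\mathrm{tr}}(e^{\lambda S_{n-1}})\,\|(\tau\otimes\mathrm{id})(e^{\lambda\widetilde d_n})\|.
\]

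Finally, your proposal does not address (ii) at all. That part is true and short: $\mathcal{J}(d_j)\in L_1(\M_j\bar\otimes\mathbb{M}_{2\times2})$, and the trace preserving conditional expectation onto $\M_{j-1}\bar\otimes\mathbb{M}_{2\times2}$ is $\mathcal{E}_{j-1}\otimes\mathrm{id}$. Since $\mathcal{E}_{j-1}(d_j^*)=\mathcal{E}_{j-1}(d_j)^*$, this gives $(\mathcal{E}_{j-1}\otimes\mathrm{id})(\mathcal{J}(d_j))=\mathcal{J}(\mathcal{E}_{j-1}(d_j))=0$.
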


\begin{proof}
 (i) It is easy to see  $\mathcal{J}(d_{j})$ is mean zero for each $j\geq1$. To show $(\mathcal{J}(d_{j}))_{j\geq1}$ is independent, it suffices to check that for any $j,k\geq1$ with $j\neq k$ the following holds
\[
\tau \otimes \bar{\mathrm{tr}} [\mathcal{J}(d_k)^m \mathcal{J}(d_j)^n]=\tau \otimes \bar{\mathrm{tr}} [\mathcal{J}(d_k)^m ] \cdot \tau \otimes \bar{\mathrm{tr}} [\mathcal{J}(d_j)^n],\quad\forall~m,n\in \mathbb{N}.
\]
If one of $\{m,n\}$ is odd, then we can see that both side of the above equality are equal to zero. Hence, it remains to deal with the case when both $m,n$ are even. In this case, we have 
\begin{equation*}
\mathcal{J}(d_k)^m=
\begin{pmatrix}
|d_k^*|^m&0\\
0&|d_k|^m
\end{pmatrix},\quad \mathcal{J}(d_j)^n=
\begin{pmatrix}
|d_k^*|^n&0\\
0&|d_k|^n
\end{pmatrix}.
\end{equation*}
Then 
\begin{align*}
\tau \otimes \bar{\mathrm{tr}} [\mathcal{J}(d_k)^m \mathcal{J}(d_j)^n]&= \frac{1}{2}[\tau(|d_k^*|^m|d_j^*|^n)+ \tau(|d_k|^m|d_j|^n)]\\
&=\frac{1}{2}[\tau(|d_k^*|^m)\tau(|d_j^*|^n)+ \tau(|d_k|^m)\tau(|d_j|^n)]\\
&=\frac{1}{2}[\tau(|d_k|^m)\tau(|d_j|^n)+ \tau(|d_k|^m)\tau(|d_j|^n)]\\
&=\tau(|d_k|^m)\tau(|d_j|^n)=\tau \otimes \bar{\mathrm{tr}} [\mathcal{J}(d_k)^m ] \cdot \tau \otimes \bar{\mathrm{tr}} [\mathcal{J}(d_j)^n],
\end{align*}
where we used the independence of $d_{j}$ and $d_{k}$ for $j\neq k$ are independent in the second equality. The proof of (ii) is complete analogous to (i), and we omit the detail.
\end{proof}

By Lemma \ref{lem:dist} and Lemma \ref{lem:ind and mart}, we can reduce  noncommutative deviation inequalities to their corresponding self-adjoint counterparts. In particular, the self-adjointness assumption in Theorem \ref{Nc-Azuma} can be dropped. Indeed, for a general noncommutative martingale $(x_k)_{k\geq1}$, combining Lemma \ref{lem:dist}, Lemma \ref{lem:ind and mart}, \eqref{eq:infty} and Theorem \ref{Nc-Azuma},  we have
\begin{align*}
\tau(\mathds{1}_{(r, \infty)}(|x_{n}|))&=\tau\otimes\bar{\mathrm{tr}}\left[\mathds{1}_{[r,\infty)}(|\mathcal{J}(x_n)|)\right]\leq 2\exp\left\{\frac{-r^{2}}{2\sum_{j=1}^{n}c_j^2}\right\}.
\end{align*} 

The main result of this section is the following characterization of exponential integrability of noncommutative independent sequences in terms of deviation inequalities.

\begin{thm}\label{thm:ldi}
Suppose that $(d_{j})_{j=1}^{\infty}$ is a sequence of  independent mean zero random variables and let $S_{n}=\sum_{j=1}^{n}d_{j}$ for all $n\in \N$. Then the following statements are equivalent:
\begin{enumerate}[{\rm (i)}]
\item There is a universal constant $c>0$ such that for each $n\in \N$ and $r>0$ we have
\[
\tau\left[\mathds{1}_{(nr,\infty)}(|S_{n}|)\right]\leq 4\exp\left\{-cnr\right\}.
\]
\item The uniform exponential integrability of the sequence $(x_{j})_{j\in \mathbb{N}}$, that is,
$\sup_{j\in \N}\tau(e^{|x_{j}|})<\infty$.
\end{enumerate}
\end{thm}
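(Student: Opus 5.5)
The plan is to prove the two implications separately, reading $x_j$ in (ii) as $d_j$. In both directions we first pass to self-adjoint variables via the Hermitian dilation: by Lemma \ref{lem:ind and mart}(i) the $\mathcal{J}(d_j)$ are self-adjoint, mean zero and independent with respect to $\tau\otimes\bar{\mathrm{tr}}$. By Lemma \ref{lem:dist} and linearity of $\mathcal{J}$, both the tails of $|S_n|$ and the quantity $\tau(e^{|d_j|})$ are unchanged when $d_j$ is replaced by $\mathcal{J}(d_j)$; for the latter, note that $e^{|\mathcal{J}(d_j)|}=\mathrm{diag}(e^{|d_j^*|},e^{|d_j|})$ and that $|d_j^*|,|d_j|$ have the same distribution. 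So we assume from now on that all $d_j$ are self-adjoint.

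\emph{(ii)$\Rightarrow$(i).} Set $M=\sup_j\tau(e^{|d_j|})$. This is a Cramér--Chernoff argument, with Golden--Thompson taking the place of multiplicativity of $e^{t(\cdot)}$. By Chebyshev, $\tau[\mathds{1}_{(nr,\infty)}(S_n)]\le e^{-tnr}\tau(e^{tS_n})$. Applying Theorem \ref{Golden-Thompson inequality} with $x=tS_{n-1}$ and $y=td_n$ gives $\tau(e^{tS_n})\le\tau(e^{tS_{n-1}}e^{td_n})$. Since $e^{tS_{n-1}}$ lies in the algebra generated by $d_1,\dots,d_{n-1}$ and $e^{td_n}$ in that of $d_n$, independence factorizes this as $\tau(e^{tS_{n-1}})\tau(e^{td_n})$. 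Iterating yields $\tau(e^{tS_n})\le\prod_j\tau(e^{td_j})$.

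Next we bound each factor for $0<t\le 1/2$. Using $\tau(d_j)=0$ and the scalar inequality $e^{u}\le 1+u+u^2e^{|u|}$, we get $\tau(e^{td_j})\le 1+t^2\tau(d_j^2e^{|d_j|/2})\le 1+C_Mt^2\le e^{C_Mt^2}$, where $C_M$ depends only on $M$ through $\sup_u u^2e^{-u/2}$. Hence $\tau[\mathds{1}_{(nr,\infty)}(S_n)]\le\exp\{-n(tr-C_Mt^2)\}$. The same bound holds for $-S_n$, and adding the two one-sided tails gives the two-sided bound with constant $2$; the $4$ in (i) leaves room for this and for the dilation. Optimizing in $t$ (taking $t=\min\{1/2,r/(2C_M)\}$) gives exponent $-c\min(r,r^2)n$. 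This is of the form $-cnr$ for $r\ge r_0$. For small $r$ the claimed bound $4e^{-cnr}$ is trivial when $cnr\le\log 4$, and the remaining range, $r$ small but $nr$ large, is the delicate point. I would handle it by shrinking $c$ and using the bound with $\min(r,r^2)$, noting that $\tau\le 1$ always. If the statement cannot hold literally for small $r$, the honest form is $\exp\{-c_rn\}$, and I would aim for that while checking whether the authors intend $r\ge1$.

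\emph{(i)$\Rightarrow$(ii).} Take $n=1$ in (i): $\tau[\mathds{1}_{(r,\infty)}(|d_1|)]\le 4e^{-cr}$, and this holds for every index by shifting the sequence. More directly, write $d_n=S_n-S_{n-1}$. The projection onto $\{|d_n|>2nr\}$ is controlled by those of $|S_n|>nr$ and $|S_{n-1}|>nr$ via the standard estimate $\tau(\mathds{1}_{(a+b,\infty)}(|x+y|))\le\tau(\mathds{1}_{(a,\infty)}(|x|))+\tau(\mathds{1}_{(b,\infty)}(|y|))$ (from $\mu(s+t,x+y)\le\mu(s,x)+\mu(t,y)$). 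This gives $\tau[\mathds{1}_{(s,\infty)}(|d_j|)]\le 8e^{-c's}$ uniformly in $j$. Then \eqref{eq:d-r}, or Proposition \ref{Lpsi random variables}, gives $\sup_j\tau(e^{\delta|d_j|})<\infty$ for $\delta<c'$. The main obstacle is that the statement demands exponent exactly $1$. I expect to use that the constant $c$ in (i) is universal, so that after the normalization implicit in the theorem the tail is integrable against $e^{s}$. Otherwise one obtains (ii) only with $e^{\delta|d_j|}$, which I would flag as the intended reading, consistent with the classical Cramér condition.
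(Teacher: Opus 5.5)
Your route is the paper's: Hermitian dilation to self-adjoint variables, then Golden--Thompson plus independence to get $\tau(e^{tS_n})\le\prod_j\tau(e^{td_j})$, a second-order bound on each factor using $\tau(d_j)=0$, and a Chernoff bound. For the converse you control the tails of $d_n=S_n-S_{n-1}$; the paper does this through $\|S_n\|_p\lesssim p$, the triangle inequality and Proposition \ref{Lpsi random variables}, and your use of $\mu(s+t,x+y)\le\mu(s,x)+\mu(t,y)$ is equivalent.

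\emph{The dilation step fails.} You inherit this from the paper: Lemma \ref{lem:ind and mart}(i) is false, so the $\mathcal{J}(d_j)$ need not be independent. For self-adjoint independent $d_k,d_j$,
\[
\mathcal{J}(d_k)^3\mathcal{J}(d_j)^3=\mathrm{diag}(d_k^3d_j^3,\,d_k^3d_j^3),
\]
whose trace is $\tau(d_k^3)\tau(d_j^3)$, while $\tau\otimes\bar{\mathrm{tr}}[\mathcal{J}(d_k)^3]=0$. The paper's proof of the lemma overlooks the case where $m$ and $n$ are both odd. As a result, your factorization of $\tau\otimes\bar{\mathrm{tr}}(e^{t\mathcal{J}(S_{n-1})}e^{t\mathcal{J}(d_n)})$ is wrong: for commuting self-adjoint $d_j$ it differs from the product of the traces by $\tau(\sinh(tS_{n-1}))\,\tau(\sinh(td_n))$.

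The repair is to skip the dilation and split into real and imaginary parts:
\begin{itemize}
\item $\mathrm{Re}\,d_j$ and $\mathrm{Im}\,d_j$ are affiliated with $vN(d_j)$, so each family is self-adjoint, independent and mean zero.
\item $\mu(t,\mathrm{Re}\,d_j)\le\mu(t/2,d_j)$ gives $\tau(e^{|\mathrm{Re}\,d_j|})\le2\tau(e^{|d_j|})$, and likewise for $\mathrm{Im}$.
\item $\tau(\mathds{1}_{(2s,\infty)}(|S_n|))\le\tau(\mathds{1}_{(s,\infty)}(|\mathrm{Re}\,S_n|))+\tau(\mathds{1}_{(s,\infty)}(|\mathrm{Im}\,S_n|))$.
\end{itemize}

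\emph{The statement is false as written, in both directions.} The two obstacles you flagged are genuine.

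For (ii)$\Rightarrow$(i): put $s=nr$. Then (i) says $\tau(\mathds{1}_{(s,\infty)}(|S_n|))\le4e^{-cs}$ for all $s>0$, uniformly in $n$. By \eqref{eq:d-r} this gives $\sup_n\|S_n\|_2^2\le 8/c^2$. But $\|S_n\|_2^2=\sum_{j\le n}\|d_j\|_2^2$ for independent mean-zero variables. So i.i.d.\ Rademacher variables, which have $\tau(e^{|d_j|})=e$, violate (i) for every $c>0$. No shrinking of $c$ can handle the range ``$r$ small, $nr$ large''. What the argument actually proves is your bound $4\exp\{-c\min(r,r^2)n\}$, which is the $O(e^{-c_rn})$ of \eqref{eq:ldi}. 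The paper's proof breaks exactly here. The choice $\lambda=r/(2e^2)$ gives $\exp\{-r^2n/(4e^2)\}$, and passing to $2\exp\{-rn/(4e^2)\}$ needs $r(1-r)n/(4e^2)\le\log 2$, which fails for large $n$. The paper instead invokes the trivially true inequality $r(1-r)n/(4e^2)>-\log2$.

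For (i)$\Rightarrow$(ii): both arguments give only $\sup_j\tau(e^{\delta|d_j|})<\infty$ for some $\delta>0$, since Proposition \ref{Lpsi random variables} yields some constant, not $1$. Exponent $1$ really is unattainable. Let $\varepsilon$ be a Rademacher variable independent of $X\ge0$ with $\tau(\mathds{1}_{(s,\infty)}(X))=\min\{1,4e^{-s}\}$, and set $d_1=\varepsilon X$, $d_j=0$ for $j\ge2$. Then (i) holds with $c=1$, yet $\tau(e^{|d_1|})=\infty$.

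Two further points on the converse:
\begin{itemize}
\item Your ``shifting the sequence'' remark is invalid, since (i) concerns partial sums starting at $d_1$. The $S_n-S_{n-1}$ argument that replaces it is correct.
\item That argument uses (i) at levels $s=nr<n$, and the converse fails for the weakened readings you propose ($r\ge1$, $O(e^{-c_rn})$, or $4e^{-c\min(r,r^2)n}$). Take $d_j=0$ except $d_{n_k}=\pm\sqrt{n_k}$, each with probability $\tfrac12 e^{-n_k^{1/4}}$, where $n_k=2^{2^k}$. Then $|S_n|\le2\sqrt n$, so all three weakened forms hold (for small $c$). Yet $\tau(e^{\delta|d_{n_k}|})\ge e^{\delta\sqrt{n_k}-n_k^{1/4}}\to\infty$ for every $\delta>0$.
\end{itemize}
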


\begin{proof}
To show (i) implies (ii), it suffices to assume that $c=1$. For every $n\in \N$, by the assumption, we have
\[
\left\|\frac{1}{n}S_{n}\right\|^{p}_{p}=\int_{0}^{\infty}~p r^{p-1}\tau\left[\mathds{1}_{(r,\infty)}\left(\frac{1}{n}|S_{n}|\right)\right]dr\leq 4\int_{0}^{\infty}~pr^{p-1}e^{-rn}dr.
\]
Changing the variable $rn$ to $\lambda$ we have
\begin{equation}\label{exp int 1}
\|S_{n}\|^{p}_{p}\leq\int_{0}^{\infty}p\lambda^{p-1}e^{-\lambda}d\lambda\leq K^{p}_{1}p^{p},~\mbox{for}~p\geq 1,~\mbox{and}~n\in \N.
\end{equation}
By the triangle inequality and \eqref{exp int 1}, we get
\begin{equation}\label{exp int 2}
\|d_{n}\|^{p}_{p}\leq (\|S_{n}\|_{p}+\|S_{n-1}\|_{p})^{p}\leq 2^{p-1}(\|S_{n}\|^{p}_{p}+\|S_{n-1}\|^{p}_{p})\leq (2K_{1})^{p}p^{p},~\mbox{for all}~p\geq 1.
\end{equation}
Hence, by Proposition \ref{Lpsi random variables}, there exists $K_{3}>0$ with $\tau\left(e^{|d_{n}|}\right)\leq K_{3}$ for every $n\in \N$.

Conversely, assume that $(d_{j})_{j=1}^{\infty}$ are self-adjoint with $\|d_{j}\|^{p}_{p}\leq p^{p}$ for all $p\geq 1$ and $j\in \N$. For each $\lambda>0$, the Golden-Thompson inequality Theorem \ref{Golden-Thompson inequality} implies that
\[
\tau\left(e^{\lambda S_{n}}\right)\leq \tau\left(e^{\lambda S_{n-1}}e^{\lambda d_{n}}\right)= \tau\left(e^{\lambda S_{n-1}}\right)\tau\left(e^{\lambda d_{n}}\right),
\]
where we used the fact $(d_j)_{j=1}^{\infty}$ is independent.
Applying the Taylor expansion to $\tau(e^{\lambda d_{n}})$ and noting $\tau(d_{n})=0$, we get
\[
\tau(e^{\lambda d_{n}})=1+\sum_{p\geq 2}\frac{\lambda^{p}\|d_{n}\|^{p}_{p}}{p!}\leq 1+\sum_{p\geq 2}(e\lambda)^{p}=1+\frac{e^{2}\lambda^{2}}{1-e\lambda},~\mbox{whenever}~|e\lambda|<1,
\]
where we use the Stirling approximation $p!\geq (p/e)^{p}$ in the first inequality. Moreover, when $|e\lambda|<1/2$ we can further estimate $1+\sum_{p\geq 2}\frac{e^{2}\lambda^{2}}{1-e\lambda}$ as  follows
\[
1+\frac{e^{2}\lambda^{2}}{1-e\lambda}\leq 1+2e^{2}\lambda^{2}\leq \exp\{e^{2}\lambda^{2}\}.
\]
Iterating the argument and applying the Chernoff bound, we have
\[
\tau\left(\mathds{1}_{(nr,\infty)}(S_{n})\right)\leq \inf_{0<\lambda<1/2e}\exp\{-\lambda rn+\lambda^{2}e^{2}n\},\mbox{for each}~n\in\N,~r>0.
\]
For $r>e/(e+4)$, we choose $\lambda=\frac{1}{4e}$, and it is easy to see that $\exp\{-\frac{rn}{4e}+\frac{n}{16}\}\leq \exp\{-\frac{rn}{16}\}$. Hence
\begin{equation}\label{exponential 1}
\tau\left(\mathds{1}_{(nr,\infty)}(S_{n})\right)\leq \exp\{-rn/16\},~n\in \N,~r>e/(e+4).
\end{equation}
For $0<r\leq e/(e+4)$, we choose $\lambda=\frac{r}{2e^{2}}$, then the fact $\frac{r(1-r)n}{4e^{2}}>-\log 2$ yields that
\begin{equation}\label{exponential 2}
\tau\left(\mathds{1}_{(nr,\infty)}(S_{n})\right)\leq 2\exp\left\{-\frac{rn}{4e^{2}}\right\},~n\in \N,~0\leq r\leq e/(e+4).
\end{equation}
Combing \eqref{exponential 1} and \eqref{exponential 2} yields that
\[
\tau\left(\mathds{1}_{(nr,\infty)}(S_{n})\right)\leq 2\exp\{-rn/16\},~n\in \N,~r>0.
\]
Since the sequence $(x_{j})_{j=1}^{\infty}$ is self-adjoint, we obtain via functional calculus that
\begin{equation}\label{eq:self}
\tau\left(\mathds{1}_{(nr,\infty)}(|S_{n}|)\right)\leq 4\exp\{-nr/16\},~n\in\N,~r>0.
\end{equation}

Now we consider general independent and mean zero $(d_j)_{j=1}^\infty$. Denote $\widetilde{d}_j:=\mathcal{J}(d_j)$, where $\mathcal{J}$ is as in \eqref{eq:def J}. Then, using Lemma \ref{lem:dist} and \cite[Corollary 2.8]{FK1986}, we know that, for each $j$,
$$\tau\otimes \bar{\mathrm{tr}} (e^{|\widetilde{d}_j|})=\tau (e^{|d_j|}),$$
which means $\sup_j\tau\otimes \bar{\mathrm{tr}} (e^{|\widetilde{d}_j|})<\infty$. According to Lemma \ref{lem:ind and mart}, we see that $(\widetilde{d}_j)_{j=1}^{\infty}$ is a sequence of self-adjoint independent and mean zero random variables. Due to Lemma \ref{lem:dist} and \eqref{eq:self},  we have
\[
\tau(\mathds{1}_{(r, \infty)}(|S_{n}|))=\tau\otimes\bar{\mathrm{tr}}\left[\mathds{1}_{[r,\infty)}(|\mathcal{J}(S_n)|)\right]\leq 4\exp\{-nr/16\},~n\in\N,~r>0,
\]
which completes the proof.
\end{proof}

\section{Large deviation inequalities for noncommutative martingales}\label{sec:ldi md}
In this section, we provide large deviation inequalities for noncommutative martingales under (modified) Cram\'{e}r condition and $L_p$-boundedness condition. Specifically, main results are Theorem \ref{deviation 1}, Theorem \ref{thm:ldi mcc}, and Theorem \ref{Lp-version 1}, which are noncommutative extensions of \eqref{eq:ldi md}-\eqref{eq:ldi mcd}. We conclude this section with a summary on optimality and comments of our results. 
	
\subsection{Deviation inequalities under (modified) Cram\'{e}r condition}
\begin{thm}\label{deviation 1}
Let $(x_{k})_{k\geq 1}$ be a noncommutative martingale such that $\sup_{k\geq1}\tau\left(e^{|dx_{k}|}\right)<\infty$. Then, for each $n\in \mathbb{N}$, $r>0$ and $\varepsilon\in (0,1)$, we have	
\begin{equation*}
\tau\left(\mathds{1}_{(nr,\infty)}(|x_{n}|)\right)\leq 6\exp\left\{-\frac{(1-\varepsilon) r^{2/3}n^{1/3}}{2}\right\}.
\end{equation*}
\end{thm}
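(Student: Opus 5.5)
Following Lesigne and Volný, I plan to truncate each martingale difference, apply the noncommutative Azuma inequality (Theorem \ref{Nc-Azuma}) to the bounded part, and control the unbounded part by Chebyshev-type estimates from the Cramér condition. First reduce to the self-adjoint case. Lemma \ref{lem:dist} and Lemma \ref{lem:ind and mart}(ii) show that $(\mathcal{J}(dx_k))$ are self-adjoint martingale differences with the same distribution functions. In particular $\tau\otimes\bar{\mathrm{tr}}(e^{|\mathcal{J}(dx_k)|})=\tau(e^{|dx_k|})$, exactly as in the proof of Theorem \ref{thm:ldi}. So assume $dx_k=d_k$ is self-adjoint with $\sup_k\tau(e^{|d_k|})\le C$.

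Fix a truncation level $a>0$, to be chosen later of order $(nr)^{1/3}$. Write $d_k=u_k+v_k$, with
\[
u_k=d_k\mathds{1}_{[-a,a]}(d_k)-\mathcal{E}_{k-1}\big(d_k\mathds{1}_{[-a,a]}(d_k)\big)
\]
and
\[
v_k=d_k\mathds{1}_{\R\setminus[-a,a]}(d_k)-\mathcal{E}_{k-1}\big(d_k\mathds{1}_{\R\setminus[-a,a]}(d_k)\big).
\]
Both are self-adjoint martingale differences, since $\mathcal{E}_{k-1}(d_k)=0$, and $\|u_k\|_\M\le 2a$. Splitting $nr=(1-\varepsilon')nr+\varepsilon' nr$ and using the subadditivity $\tau(\mathds{1}_{(s+t,\infty)}(|A+B|))\le \tau(\mathds{1}_{(s,\infty)}(|A|))+\tau(\mathds{1}_{(t,\infty)}(|B|))$, which follows from $\mu(s+t,A+B)\le\mu(s,A)+\mu(t,B)$ after interpreting the distribution functions, we get two terms. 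Theorem \ref{Nc-Azuma} bounds the bounded part by $2\exp\{-(1-\varepsilon')^2n^2r^2/(8na^2)\}$. For the remainder, Chebyshev in $L_2$ gives $\tau(\mathds{1}_{(\varepsilon' nr,\infty)}(|\sum v_k|))\le \|\sum v_k\|_2^2/(\varepsilon' nr)^2$. Orthogonality of martingale differences in $L_2$ gives $\|\sum v_k\|_2^2=\sum\|v_k\|_2^2\le \sum_k\tau(d_k^2\mathds{1}_{(a,\infty)}(|d_k|))$, and \eqref{eq:dsb} with the Cramér bound gives $\tau(d_k^2\mathds{1}_{(a,\infty)}(|d_k|))\lesssim a^2e^{-a}$.

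This bound on the tail term is only polynomially small times $e^{-a}$, so the constants will not come out as $6e^{-(1-\varepsilon)r^{2/3}n^{1/3}/2}$ directly. My first attempt at a fix is a Markov bound with a sublinear function instead of $L_2$. Alternatively, and more simply, I would avoid the $L_2$ step and use $\{|\sum v_k|>\varepsilon' nr\}\subseteq$ the union of the $\{|d_k|>a\}$ in a projection sense. Classically, on the event $\bigcap\{|d_k|\le a\}$ one has $v_k=-\mathcal{E}_{k-1}(\cdots)$, which is tiny: of size $\lesssim ae^{-a}$ in norm, since $\|\mathcal{E}_{k-1}(d_k\mathds{1}_{|d_k|>a})\|_\M$ is not controlled by $e^{-a}$ in general. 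This is the main obstacle. The noncommutative "union" requires projections, namely the support of $\bigvee_k \mathds{1}_{(a,\infty)}(|d_k|)$, whose trace is at most $\sum_k\tau(\mathds{1}_{(a,\infty)}(|d_k|))\le nCe^{-a}$.

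Concretely, I will use the version suited to the stated constant 6. Bound $\tau(\mathds{1}_{(\varepsilon' nr,\infty)}(|\sum v_k|))\le \|\sum v_k\|_1/(\varepsilon' nr)\le 2\sum_k\tau(|d_k|\mathds{1}_{(a,\infty)}(|d_k|))/(\varepsilon' nr)\lesssim n(a+1)e^{-a}/(\varepsilon' nr)$, by \eqref{eq:dsb}. This gives a total of $2e^{-(1-\varepsilon')^2nr^2/(8a^2)}+c\,a e^{-a}/(\varepsilon' r)$. Balance the exponents by choosing $a$ with $a^3=(1-\varepsilon')^2nr^2/8\cdot$const, so that $a\asymp (nr^2)^{1/3}=r^{2/3}n^{1/3}$. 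The target exponent $\tfrac{1-\varepsilon}{2}r^{2/3}n^{1/3}$ suggests taking $a=\tfrac12 r^{2/3}n^{1/3}$, then absorbing the polynomial prefactor $a/r$ and the slack $\varepsilon'$ into $e^{\varepsilon a}$ for large $n$. Small $n$ is handled trivially, since the trace is at most $1\le 6e^{-\cdots}$ when the exponent is small. Tuning constants (the factor 8 versus 2 in Azuma, and the prefactor 6) is routine bookkeeping. The genuine difficulty is making the unbounded-part estimate sharp enough, and I expect to need exactly the $L_1$/Cuculescu-free Chebyshev route above.
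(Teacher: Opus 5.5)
Your strategy (Hermitian dilation, truncation at level $a\asymp r^{2/3}n^{1/3}$, noncommutative Azuma on the bounded part, Chebyshev on the remainder, subadditivity of distribution functions) is the paper's own. The paper obtains this theorem only as the case $\alpha=1/3$ of Theorem \ref{thm:ldi mcc}.

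Your parameters are actually better than the paper's. Take $a=\frac12 r^{2/3}n^{1/3}$ and $\varepsilon'=\varepsilon/2$, so that $(1-\varepsilon')^2\ge 1-\varepsilon$. Then Azuma gives exactly $2e^{-(1-\varepsilon)a}$, which is the stated exponent $\frac{1-\varepsilon}{2}r^{2/3}n^{1/3}$. The paper's split $t=1/\sqrt2$ with $u=(nr^2/16)^{1/3}$ only yields $\exp\{-(nr^2/16)^{1/3}\}$, and $16^{-1/3}<\frac12$.

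Your diagnosis of the $L_2$ step, which is the paper's route, is backwards. With $K=\sup_k\tau(e^{|dx_k|})$ it gives
\[
\tau\Big(\mathds{1}_{(\varepsilon' nr,\infty)}\Big(\Big|\sum_k v_k\Big|\Big)\Big)\le \frac{K(a^2+2a+2)}{\varepsilon'^2 nr^2}\,e^{-a},\qquad \frac{a^2}{nr^2}=\frac{1}{4r^{2/3}n^{1/3}}.
\]
So its prefactor \emph{decays} in $n$, while your $L_1$ prefactor $2K(a+1)/(\varepsilon' r)$ grows like $n^{1/3}$. Both can be absorbed into $e^{\varepsilon a}$ for large $n$. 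The projection/union detour can be dropped; it would not work anyway, since the operator norm of $\mathcal{E}_{k-1}(d_k\mathds{1}_{(a,\infty)}(|d_k|))$ is not small.

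The genuine gap is your final sentence about small $n$. To get $2+4=6$ you need the absorption $K(a+1)/(2\varepsilon' r)\le e^{\varepsilon a}$, and this only holds for $n\ge n_0(r,\varepsilon,K)$. For $n<n_0$ the exponent $\frac{1-\varepsilon}{2}r^{2/3}n^{1/3}$ need not be below $\log 6$, so the trivial bound $1$ does not help.

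Nothing can fill this in, because with the absolute constant $6$ the inequality is false for small $n$. Take commuting $dx_k=L\epsilon_k$ with $(\epsilon_k)$ i.i.d.\ Rademacher, so $\tau(e^{|dx_k|})=e^L<\infty$. For $n=1$ and $r=L/2$ the left-hand side is $1$, while $6\exp\{-(1-\varepsilon)(L/2)^{2/3}/2\}\to 0$ as $L\to\infty$. For any fixed $n$ the left side is still at least $2^{1-n}$, so the same failure occurs.

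What your argument actually proves, like the paper's, is the bound for $n\ge n_0(r,\varepsilon,K)$. Equivalently, it holds for all $n$ with $6$ replaced by a constant depending on $r$, $\varepsilon$ and $K$. State the conclusion in that form rather than claiming the small-$n$ case is automatic.
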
 
Moreover, we establish the following large deviation inequality which includes Theorem \ref{deviation 1} as a special case. 
\begin{thm}\label{thm:ldi mcc}
Let $(x_{k})_{k\geq 1}$ be a noncommutative martingale such that
\[
\sup_{k\geq1}\tau[\exp(|dx_{k}|^{\frac{2\alpha}{1-\alpha}})]<\infty,\quad\mbox{for some } \alpha\in(0,1).
\]
Then, there exists $c_{\alpha,r} >0$ such that
\begin{equation*}
\tau\left(\mathds{1}_{(nr,\infty)}(|x_{n}|)\right)\leq c_{\alpha,r}\exp\left\{-r^{2\alpha}n^{\alpha}/16^{\alpha}\right\},~\mbox{for}~r>0,~n\in \N.
\end{equation*}	
\end{thm}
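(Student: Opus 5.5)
We follow the classical truncation strategy of Lesigne--Voln\'{y} and Fan--Grama--Liu. The first step is a reduction via the Hermitian dilation. By Lemma \ref{lem:ind and mart}(ii), $(\mathcal{J}(dx_k))_{k\geq1}$ is a self-adjoint martingale difference sequence. Lemma \ref{lem:dist} and \cite[Corollary 2.8]{FK1986} show that the distribution of $\mathcal{J}(x_n)$ and the quantity $\tau\otimes\bar{\mathrm{tr}}[\exp(|\mathcal{J}(dx_k)|^{\beta})]$ coincide with those for $x_n$ and $dx_k$. Therefore we may assume the martingale is self-adjoint. Put $\beta=2\alpha/(1-\alpha)$ and $C=\sup_k\tau(e^{|dx_k|^{\beta}})$.

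Fix $n$, $r$ and a truncation level $t=t(n,r)>0$ to be chosen. Split each difference as $dx_k=u_k+v_k$, where
\[
u_k=dx_k\mathds{1}_{[0,t]}(|dx_k|)-\Ex_{k-1}\big(dx_k\mathds{1}_{[0,t]}(|dx_k|)\big),\qquad v_k=dx_k\mathds{1}_{(t,\infty)}(|dx_k|)-\Ex_{k-1}\big(dx_k\mathds{1}_{(t,\infty)}(|dx_k|)\big).
\]
Because $\Ex_{k-1}(dx_k)=0$, both $(u_k)$ and $(v_k)$ are self-adjoint martingale differences, and $\|u_k\|_\M\leq 2t$ since conditional expectations are contractive on $\M$. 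Write $x_n=U_n+V_n$. In the noncommutative setting the union bound must be replaced by the spectral inequality
\[
\tau(\mathds{1}_{(nr,\infty)}(|U_n+V_n|))\leq\tau(\mathds{1}_{(nr/2,\infty)}(|U_n|))+\tau(\mathds{1}_{(nr/2,\infty)}(|V_n|)),
\]
which holds by $\mu(s_1+s_2,a+b)\leq\mu(s_1,a)+\mu(s_2,b)$ \cite{FK1986}.

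For the bounded part, Theorem \ref{Nc-Azuma} gives $\tau(\mathds{1}_{(nr/2,\infty)}(|U_n|))\leq 2\exp\{-n r^2/(32t^2)\}$. For the unbounded part we use Chebyshev with $p=2$, together with orthogonality of martingale differences in $L_2$ and contractivity of $\Ex_{k-1}$ on $L_2$:
\[
\tau(\mathds{1}_{(nr/2,\infty)}(|V_n|))\leq\frac{4}{n^2r^2}\sum_{k=1}^n\|v_k\|_2^2\leq\frac{16}{nr^2}\sup_k\tau\big(|dx_k|^2\mathds{1}_{(t,\infty)}(|dx_k|)\big).
\]
Next we bound the tail moment. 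Using \eqref{eq:dsb} and $\tau(\mathds{1}_{(s,\infty)}(|dx_k|))\leq Ce^{-s^{\beta}}$ from Chebyshev, an integration by parts gives $\tau(|dx_k|^2\mathds{1}_{(t,\infty)}(|dx_k|))\leq C'(1+t^2)e^{-t^{\beta}}$.

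Finally we balance the two exponents $nr^2/t^2$ and $t^{\beta}$. Setting $t^{\beta+2}\asymp nr^2$, i.e. $t=(nr^2)^{(1-\alpha)/2}$ since $\beta+2=2/(1-\alpha)$, gives $t^{\beta}=(nr^2)^{\alpha}=r^{2\alpha}n^{\alpha}$. The bounded part then decays like $\exp\{-r^{2\alpha}n^{\alpha}/32\}$ up to constants. Choosing $t=(nr^2/16)^{(1-\alpha)/2}$ instead makes both exponents comparable to $r^{2\alpha}n^{\alpha}/16^{\alpha}$. The polynomial prefactors $(1+t^2)/(nr^2)$ are bounded for $n$ large depending on $r$, and small $n$ can be absorbed into $c_{\alpha,r}$.

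The main obstacle is getting the stated constant $16^{\alpha}$ exactly rather than some $c^{\alpha}$. The clean route is to apply Chebyshev with the $L_2$ bound only for the tail piece and to tune the $nr/2$ split and the choice of $t$. If the constants do not match exactly, we will instead use the $\varepsilon$-split $nr=(1-\varepsilon)nr+\varepsilon nr$ as in Theorem \ref{deviation 1}, and absorb the polynomial factors into $c_{\alpha,r}$. Specializing to $\alpha=1/3$, where $\beta=1$, recovers Theorem \ref{deviation 1}.
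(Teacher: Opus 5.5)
Your route is the paper's route. You truncate $dx_k$ at level $t$, recenter by $\Ex_{k-1}$, apply Azuma (via the dilation) to the bounded martingale, and use $L_2$-Chebyshev with orthogonality on the tail martingale. You then integrate by parts against $\tau(\mathds{1}_{(s,\infty)}(|dx_k|))\leq Ce^{-s^\beta}$ and balance with $t^{\beta+2}\asymp nr^2$. The tail-moment bound $C'(1+t^2)e^{-t^\beta}$ and the uniform boundedness of the prefactor $(1+t^2)/(nr^2)$ are fine. The one step that fails as written is the symmetric split $nr/2+nr/2$, and that split is exactly what fixes the constant $16^\alpha$.

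With the bounded part receiving $nr/2$, Azuma gives $2\exp\{-nr^2/(32t^2)\}$. At your choice $t=(nr^2/16)^{(1-\alpha)/2}$ this exponent is $\frac12\, r^{2\alpha}n^\alpha/16^\alpha$, not $r^{2\alpha}n^\alpha/16^\alpha$. Optimizing $t$ under this split ($t^{\beta+2}=nr^2/32$) yields at best $r^{2\alpha}n^\alpha/32^\alpha$. The ratio of that bound to the target is $\exp\{r^{2\alpha}n^\alpha(16^{-\alpha}-32^{-\alpha})\}$, which is unbounded in $n$. So ``comparable exponents'' is not enough: a constant factor in the exponent cannot be absorbed into $c_{\alpha,r}$.

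The fix is to give the Azuma part the share $s\,nr$ and the tail part $(1-s)nr$, with $s\geq 1/\sqrt2$. Azuma then gives $2\exp\{-s^2nr^2/(8t^2)\}$. With $s=1/\sqrt2$ and $t^2=(nr^2/16)^{1-\alpha}$, both exponents equal exactly $(nr^2/16)^\alpha=r^{2\alpha}n^\alpha/16^\alpha$. The tail term is at most $\frac{C'(1+t^2)}{(1-s)^2nr^2}e^{-t^\beta}$, whose prefactor is bounded in terms of $\alpha,r$ only. This is precisely the paper's choice: $t=1/\sqrt2$ in its notation, and $u=(r/(4\sqrt n))^{1-\alpha}$ before replacing $r$ by $nr$.

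Your $\varepsilon$-fallback amounts to the same thing, but only if the larger piece $(1-\varepsilon)nr$, with $1-\varepsilon\geq 1/\sqrt2$, goes to the Azuma part. You should state this and do the computation rather than leave it conditional.

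Your closing remark also needs adjusting. Letting $s\to 1$ gives the exponent $(s^2nr^2/8)^\alpha$, and that is what recovers Theorem \ref{deviation 1} at $\alpha=1/3$. The $16^{\alpha}$ form by itself is weaker than Theorem \ref{deviation 1} there.
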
	
	
\begin{proof}
We  use a truncating argument as in \cite{LV2001} (see also \cite{FGL2012}) to prove the result. 
For a fixed $\alpha>0$ we truncate the martingale $(x_{k})_{k=0}^{\infty}$ with parameter $u>0$ as follows:
\begin{equation*}
dy_{k}\coloneqq dx_{k}\mathds{1}_{[0,u]}(|dx_{k}|)-\Ex_{k-1}\left[dx_{k}\mathds{1}_{[0,u]}(|dx_{k}|)\right],
\end{equation*}
and
\begin{equation*}
dz_{k}\coloneqq dx_{k}\mathds{1}_{(u,\infty)}(|dx_{k}|)-\Ex_{k-1}\left[dx_{k}\mathds{1}_{(u,\infty)}(|dx_{k}|)\right],
\end{equation*}
		Denote
		\begin{equation*}
			y_{n}\coloneqq\sum_{k=1}^{n}dy_{k}\quad\mbox{and}\quad z_{n}\coloneqq \sum\limits_{k=1}^{n}dz_{k}.
		\end{equation*}
It is obvious that $x_{n}=y_{n}+z_{n}$ for each $n$, and both $(y_{n})_{n\geq 1}$ and $(z_{n})_{n\geq 1}$ are martingales. For arbitrary $t\in(0,1)$, it follows from \cite[Lemma 2.1]{JRWZ2020} that
		\begin{equation}\label{estimate 1}
			\tau\left(\mathds{1}_{(r,\infty)}(|x_{n}|)\right)\leq \tau\left(\mathds{1}_{(rt,\infty)}(|y_{n}|)\right)+\tau\left(\mathds{1}_{(r(1-t),\infty)}(|z_{n}|)\right).
		\end{equation}
		We shall estimate the two terms in the right hand side of \eqref{estimate 1} separately. 
		
Combining the noncommutative Azuma inequality (i.e., Theorem \ref{Nc-Azuma}) with Lemma \ref{lem:dist} and Lemma \ref{lem:ind and mart} we get that
		\begin{equation}\label{estimate 6}
				\tau\left(\mathds{1}_{(rt,\infty)}(|y_{n}|)\right)\leq 2\exp\left\{-\frac{r^{2}t^{2}}{8nu^2}\right\},
		\end{equation}
		where we used the fact $\left\|dy_{k}\right\|_{\M}\leq 2u$ for each $k\in \N$.
		
To bound the  term $\tau\left(\mathds{1}_{(r(1-t),\infty)}(|z_{n}|)\right)$, we apply Chebyshev inequality to get
\begin{equation}\label{eq:zn}
	\tau\left(\mathds{1}_{(r(1-t),\infty)}(|z_{n}|)\right)\leq \frac{\|z_n\|_2^2}{r^2(1-t)^2}=\sum_{k=1}^n\frac{\|dz_k\|_2^2}{r^2(1-t)^2},
\end{equation}
where the equality is due to the orthogonality of martingale differences.
		For each $1\leq k\leq n$, basic calculation gives us 
		\begin{equation*}
			\begin{aligned}
				\|dz_{k}\|^{2}_{2}&=\tau\left((dx_{k})^{2}\mathds{1}_{(u,\infty)}(|dx_{k}|)\right)-\left\|\Ex_{k}\left[dx_{k}\mathds{1}_{(\alpha n^{1/3},\infty)}(|dx_{k}|)\right]\right\|^{2}_{2}\\
				&\leq\tau\left((dx_{k})^{2}\mathds{1}_{(u,\infty)}(|dx_{k}|)\right),
				\end{aligned}
		\end{equation*}
	which, together with \eqref{eq:dsb}, implies 
	\begin{equation}\label{estimate 2}
				\|dz_{k}\|^{2}_{2}\leq -\int_u^{\infty} t^2 dF_k(t),
	\end{equation}
where $F_k(t)=\tau(\mathds{1}_{(u,\infty)}(|dx_k|))$. It follows from the Chebyshev inequality and  the assumption $\sup_{k\geq1}\tau[\exp(|dx_{k}|^{\frac{2\alpha}{1-\alpha}})]\leq K$ that for each $k\geq1$ and $t>0$ we have
		\begin{equation}\label{estimate 3}
			F_{k}(t)\leq \exp\{-t^{\frac{2\alpha}{1-\alpha}}\} \tau[\exp(|dx_{k}|^{\frac{2\alpha}{1-\alpha}})] \leq K\exp\{-t^{\frac{2\alpha}{1-\alpha}}\}.
		\end{equation}
		Substituting \eqref{estimate 3} into \eqref{estimate 2} yields
		\begin{align}\label{eq:zk}
			\|dz_{k}\|^{2}_{2}&\leq u^2F_k(u)+ \int_u^{\infty} 2t F_k(t) dt\nonumber\\
			&\leq Ku^2 \exp\{-u^{\frac{2\alpha}{1-\alpha}}\}+ 2K \int_u^{\infty} t\exp\{-t^{\frac{2\alpha}{1-\alpha}}\} dt.
		\end{align}
	Observe that the function $g(t)=t^3\exp\{-t^{\frac{2\alpha}{1-\alpha}}\}$ is decreasing in $[\beta,\infty)$ and is increasing in $[0,\beta]$ with
\[
\beta=\left(\frac{3(1-\alpha)}{2\alpha}\right)^{\frac{1-\alpha}{2\alpha}}.
\]
Then, for $u<\beta$, we have
	\begin{equation}\label{eq:beta 1}
		\begin{aligned}
			\int_{u}^{\infty}t\exp\{-t^{\frac{2\alpha}{1-\alpha}}\}dt&\leq \int_{u}^{\beta}t\exp\{-t^{\frac{2\alpha}{1-\alpha}}\}dt+\int_{\beta}^{\infty}t^{-2}t^3\exp\{-t^{\frac{2\alpha}{1-\alpha}}\}dt\\
			&\leq \int_{u}^{\beta}t\exp\{-u^{\frac{2\alpha}{1-\alpha}}\}dt+\int_{\beta}^{\infty}t^{-2}\beta^3\exp\{-\beta^{\frac{2\alpha}{1-\alpha}}\}dt\\
			&\leq \frac{3}{2}\beta^2\exp\{-u^{\frac{2\alpha}{1-\alpha}}\}.
		\end{aligned}
	\end{equation}
For the case $u\geq\beta$, we can similarly show 
\begin{equation}\label{eq:beta 2}
		\int_{u}^{\infty}t\exp\{-t^{\frac{2\alpha}{1-\alpha}}\}dt\leq u^2\exp\{-u^{\frac{2\alpha}{1-\alpha}}\}.
\end{equation}
Combining \eqref{estimate 2}, \eqref{eq:zk}, \eqref{eq:beta 1}, and \eqref{eq:beta 2} , we have, for each $k$,
$$\|dz_k\|_2^2\leq 3K(u^2+\beta^2) \exp\{-u^{\frac{2\alpha}{1-\alpha}}\}.$$
which, together with \eqref{eq:zn}, gives us
\begin{equation}\label{eq:yn}
		\tau\left(\mathds{1}_{(r(1-t),\infty)}(|z_{n}|)\right)\leq \frac{n}{r^2(1-t)^2} 3K (u^2+\beta^2) \exp\{-u^{\frac{2\alpha}{1-\alpha}}\}.
\end{equation}
	
	Now, we conclude from \eqref{estimate 1}, \eqref{estimate 6} and \eqref{eq:yn} that 
	\begin{equation*}
		\tau\left(\mathds{1}_{(r,\infty)}(|x_{n}|)\right)		\leq 2\exp\left\{-\frac{r^{2}t^{2}}{8nu^2}\right\}+ \frac{n}{r^2(1-t)^2} 3K (u^2+\beta^2) \exp\{-u^{\frac{2\alpha}{1-\alpha}}\}.
	\end{equation*}
Taking $t=1/\sqrt{2}$ and $u=(\frac{r}{4\sqrt{n}})^{1-\alpha}$, we deduce from the above inequality that, for each $r>0$,
	\begin{equation}\label{eq:r}
	\tau\left(\mathds{1}_{(r,\infty)}(|x_{n}|)\right)		\leq c_{\alpha,r,n} \exp\Big\{-\Big(\frac{r^2}{16n}\Big)^{\alpha}\Big\},
\end{equation}
	where 
	$$c_{\alpha,r,n}=2+15nK\Big(\frac{1}{r^{2\alpha}(16n)^{1-\alpha}}+\frac{\beta^2}{r^2}\Big)\leq 2+15K\Big(\frac{n^{2\alpha}}{r^{2\alpha}16^{1-\alpha}}+\frac{n^2\beta^2}{r^2}\Big).$$
	The desired inequality of the theorem follows by replacing $r$ by $nr$ in \eqref{eq:r}.
\end{proof}

\subsection{Deviation inequality under $L_p$-boundedness condition}
We aim to derive a deviation inequality for noncommutative martingales fulfill the $L_{p}$-boundedness condition. Before going further, we apply the noncommutative Burkholder-Gundy inequality to obtain the following elementary lemma. 

\begin{lem}\label{Lp bounded lemma}
Let $1<p<\infty$, and let  $(x_k)_{k\geq1}$ be a noncommutative $L_{p}$-martingale. Suppose that $\sup_k\|dx_k\|_p\leq K$ for some $K>0$.  Then there exists a positive constant $C_p$ such that
\[
\|x_n\|_p\leq C_p n^{\frac{\max\{2,p\}}{2}}K^p,~\mbox{for each}~n\in \N.
\]
\end{lem}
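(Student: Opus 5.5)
The plan is to read the estimate in its natural homogeneous form,
\[
\|x_n\|_p^p\leq C_p\, n^{\max\{p/2,1\}}K^p ,
\]
which is the $p$-th power version of the statement. Both sides then scale like $K^p$, and the exponent of $n$ is $\max\{2,p\}/2$ up to the normalisation by $p$. I flag that this is an interpretation: the literal statement has $\|x_n\|_p$ (not its $p$-th power) on the left and $K^p$ on the right, and the bound I can prove does not imply that literal form when $K<1$. Under this reading, the tool is Theorem \ref{Nc-BG} (noncommutative Burkholder--Gundy), applied separately for $p\ge 2$ and $1<p<2$.

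\emph{Case $2\leq p<\infty$.} By Theorem \ref{Nc-BG} it suffices to bound the row and column square functions. Since $p/2\geq 1$, the space $L_{p/2}(\M)$ is normed, so the triangle inequality gives
\[
\Big\|\Big(\sum_{k=1}^n|dx_k|^2\Big)^{1/2}\Big\|_p^2=\Big\|\sum_{k=1}^n|dx_k|^2\Big\|_{p/2}\leq \sum_{k=1}^n\||dx_k|^2\|_{p/2}=\sum_{k=1}^n\|dx_k\|_p^2\leq nK^2 .
\]
The same bound holds for $\sum_k|dx_k^*|^2$, because $\|dx_k^*\|_p=\|dx_k\|_p$. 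Hence $\|x_n\|_p\leq c_p n^{1/2}K$, and raising to the $p$-th power gives $\|x_n\|_p^p\leq c_p^p n^{p/2}K^p$.

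\emph{Case $1<p<2$.} In the infimum of Theorem \ref{Nc-BG} I take the trivial decomposition $dy_k=dx_k$, $dz_k=0$. It then remains to bound
\[
\Big\|\Big(\sum_{k=1}^n|dx_k|^2\Big)^{1/2}\Big\|_p^p=\tau\Big(\Big(\sum_{k=1}^n|dx_k|^2\Big)^{p/2}\Big).
\]
Here $q=p/2\in(0,1)$, and I will use the $q$-subadditivity of the noncommutative $L_q$ quasi-norm on positive operators, $\tau((a+b)^q)\leq\tau(a^q)+\tau(b^q)$ for $a,b\geq0$. This is the trace form of Rotfel'd's inequality for the concave function $t\mapsto t^q$ with value $0$ at $0$; alternatively one can invoke the standard fact that $\|\cdot\|_q^q$ is subadditive on $L_q(\M)$ for $0<q<1$. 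Iterating over $k$ yields
\[
\tau\Big(\Big(\sum_{k=1}^n|dx_k|^2\Big)^{p/2}\Big)\leq\sum_{k=1}^n\tau(|dx_k|^p)\leq nK^p,
\]
so $\|x_n\|_p^p\leq c_p^p nK^p$.

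Combining the two cases gives $\|x_n\|_p^p\leq C_p n^{\max\{p/2,1\}}K^p$. The only nontrivial input beyond Theorem \ref{Nc-BG} is the concave trace inequality in the case $1<p<2$; I expect that step to be the main point needing a citation. The case $p\ge2$ is a routine triangle-inequality argument.
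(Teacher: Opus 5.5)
Your proof is correct and follows essentially the same route as the paper: Theorem \ref{Nc-BG} with the trivial decomposition plus the subadditivity of $\|\cdot\|_{p/2}^{p/2}$ for $1<p<2$, and the triangle inequality in $L_{p/2}(\M)$ for $p\geq 2$. Your homogeneous reading $\|x_n\|_p^p\leq C_p n^{\max\{2,p\}/2}K^p$ is exactly what the paper's argument establishes and what the Chebyshev step of Theorem \ref{Lp-version 1} uses, so you were right to flag the literal form as a misprint.
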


\begin{proof}
Note that $\|\cdot\|_{L_r(\mathcal{M})}$ is  a $r$-norm with $0<r<1$. For $1<p<2$, using Theorem \ref{Nc-BG}, we immediately have
\begin{align*}
\|x_n\|_p^p&\leq A_p \left\|\left(\sum_{k=1}^n|dx_k|^2\right)^{1/2}\right\|_p^p=\left\|\sum_{k=1}^n|dx_k|^2\right\|_{p/2}^{p/2}\\
&\leq A_p\sum_{k=1}^n\||dx_k|^2\|_{p/2}^{p/2}=\sum_{k=1}^n\|dx_k\|_{p}^{p}\leq A_p nK^p.
\end{align*}
For   $2\leq p<\infty$, we apply Theorem \ref{Nc-BG} again and the triangle inequality to get
\begin{align*}
	\|x_n\|_p&\leq B_p\max\left\{\left\|\left(\sum_{k=1}^n|dx_k|^2\right)^{1/2}\right\|_p,\left\|\left(\sum_{k=1}^n|dx_k^*|^2\right)^{1/2}\right\|_p \right\}\\
	&=B_p\max\left\{\left\|\sum_{k=1}^n|dx_k|^2\right\|_{p/2}^{1/2},\left\|\sum_{k=1}^n|dx_k^*|^2\right\|_{p/2}^{1/2}\right\}\\
	&\leq 2B_p\left(\sum_{k=1}^n\|dx_k\|_{p}^2\right)^{1/2}\leq 2B_p n^{1/2}K.
\end{align*}
The desired inequality follows. 
\end{proof}

\begin{thm}\label{Lp-version 1}
Suppose that $(x_{n})_{n=0}^{\infty}$ is a $L_{p}$-bounded martingale for $1\leq p<\infty$ and $M>0$ such that $\|dx_{k}\|_{L_{p}(\M)}\leq M$ for every $k\in \N$. Then there exists a constant $C_{p}>0$ denpending only on $p$ such that
\begin{equation}
\tau\left(\mathds{1}_{(nr,\infty)}(|x_{n}|)\right)\leq \frac{C_{p}M^{p}}{r^{p}n^{p\left(1-\frac{1}{\min\{p,2\}}\right)}},~\mbox{for}~r>0.
\end{equation}
\end{thm}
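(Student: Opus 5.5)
The plan is to combine a Chebyshev-type estimate in $L_p(\M)$ with Lemma \ref{Lp bounded lemma}, using the distribution formula \eqref{eq:d-r}. For any $\lambda>0$, the spectral projection $e=\mathds{1}_{(\lambda,\infty)}(|x_n|)$ satisfies $\lambda^p e\leq |x_n|^p e$. Hence
\[
\tau\left(\mathds{1}_{(\lambda,\infty)}(|x_n|)\right)\leq \frac{\|x_n\|_p^p}{\lambda^p}.
\]
We will apply this with $\lambda=nr$. It then remains to bound $\|x_n\|_p^p$ by $C_pM^pn^{a}$, where
\[
p-a=p\left(1-\frac{1}{\min\{p,2\}}\right).
\]
This means we need $a=1$ when $1<p\le 2$, and $a=p/2$ when $p\geq 2$.

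\emph{Case $p\ge 2$.} The second half of the proof of Lemma \ref{Lp bounded lemma} (Theorem \ref{Nc-BG} together with the triangle inequality in $L_{p/2}$, which is a genuine norm since $p/2\ge1$) gives $\|x_n\|_p\le 2B_pn^{1/2}M$. Raising to the $p$-th power yields $\|x_n\|_p^p\le (2B_p)^pn^{p/2}M^p$. Plugging this in gives the bound $C_pM^p r^{-p} n^{-p/2}$, which is exactly the claim since $1-1/2=1/2$.

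\emph{Case $1<p<2$.} Theorem \ref{Nc-BG} only gives an infimum over decompositions $dx_k=dy_k+dz_k$. Taking the trivial decomposition $dz_k=0$ bounds $\|x_n\|_p$ by $A_p\|(\sum|dx_k|^2)^{1/2}\|_p$. Then, since $p/2<1$, the quasi-norm $\|\cdot\|_{p/2}$ satisfies the $p/2$-triangle inequality $\|a+b\|_{p/2}^{p/2}\le\|a\|_{p/2}^{p/2}+\|b\|_{p/2}^{p/2}$. This gives $\|x_n\|_p^p\le A_p^p\sum_k\|dx_k\|_p^p\le A_p^pnM^p$. Chebyshev then yields $C_pM^p r^{-p}n^{1-p}$, matching the exponent $p(1-1/p)=p-1$.

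\emph{Case $p=1$.} The exponent $n^{p(1-1/\min\{p,2\})}=n^0=1$, so the required bound is $C_1M/r$. This follows from Chebyshev and the triangle inequality, which give $\|x_n\|_1\le nM$ and hence $\tau(\mathds{1}_{(nr,\infty)}(|x_n|))\le nM/(nr)=M/r$. Alternatively, Lemma \ref{lem-e1} with $p=1$ could be used.

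The main obstacle is the case $1<p<2$. There one must justify the trivial choice in the infimum, which is harmless since the infimum is at most the value at any decomposition. One must also justify the subadditivity of $\|\cdot\|_{p/2}^{p/2}$ on positive operators in the noncommutative setting. For the latter I would cite the known fact that $\tau(f(a+b))\le\tau(f(a))+\tau(f(b))$ for positive $a,b$ and concave $f(t)=t^{p/2}$ (Rotfel'd/McCarthy-type inequality). I would also note that the lemma as stated has a typo ($K^p$ versus the $p$-th power), and I would use the corrected form $\|x_n\|_p^p\le C_pn^{\max\{2,p\}\cdot(\cdot)}M^p$ derived above.
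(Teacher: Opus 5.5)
Your proposal is correct and follows the paper's proof. The paper also combines Chebyshev's inequality in $L_p(\M)$ at level $nr$ with Lemma~\ref{Lp bounded lemma}, which uses Burkholder--Gundy with the trivial decomposition and the $p/2$-triangle inequality for $1<p<2$, and the triangle inequality in $L_{p/2}$ for $p\ge 2$. Your explicit argument for $p=1$ via $\|x_n\|_1\le\sum_k\|dx_k\|_1$, and your correction of the lemma to a bound on $\|x_n\|_p^p$, fill in details that the paper calls ``trivial'' or misstates.
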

	
\begin{proof}
Note that for $p=1$ the right hand side of the desired inequality does not capture any information of $n$, and hence it is trivial for this case. And for the case $1<p<\infty$,  we get, for each $n\in \mathbb{N}$,
\begin{equation}\label{Chebyshev inequality for Lp}
\tau\left(\mathds{1}_{(nr,\infty)}(|x_{n}|)\right)\leq \frac{\|x_n\|_p^p}{n^pr^p}.
\end{equation}
Substituting the estimate obtained via Lemma \ref{Lp bounded lemma} to \eqref{Chebyshev inequality for Lp} yields the desired inequality.
\end{proof}
Remarks on optimality and refinements are summarized as follows. 
\begin{com}
Theorem \ref{deviation 1}-\ref{Lp-version 1} are optimal, because the optimality have been evidenced in the commutative setting and we refer to \cite{FGL2012} and \cite{LV2001} for more details.
\end{com}
 
\begin{rem}
We will obtain a maximal version of the inequality for the exponential case when applying the Cuculescu projections as we have done in our maximal version of noncommutative Azuma inequality. For the $L_{p}$ case, a direct strengthening version of the Bukholder-Gundy inequality yields the desired strengthening.
\end{rem}

\section{Application in noncommutative egodic theory}\label{sec:application}

In this section, we apply the large deviation inequalities for noncommutative martingales established in Sect. \ref{sec:ldi md} to the study of noncommutative ergodic theory. The approach presented below is inspired by method derived by Lesigne and Voln\'{y} \cite{LV2001}. Assume from now on that $T:\mathcal{M}\to \mathcal{M}$ is a linear mapping fulfilling the following conditions:
\begin{enumerate}[(i)]
\item $T:\mathcal{M}\to \mathcal{M}$ is a $*$-isomorpshim;
\item $T$ is trace preserving, that is, $\tau\circ T=\tau$;
\item $T$ extends to be a bounded self-adjoint operator on $L_{2}(\mathcal{M})$, that is, $\tau\left(T(y)^{*}x\right)=\tau(y^{*}T(x))$ for every $x,y\in L_2(\mathcal{M})$;
\item $T$ is normal;
\item there exists a invariant von Neumann sub-algebra $\mathcal{A}$ of $\mathcal{M}$, that is, $\mathcal{A}\subseteq T^{-1}(\mathcal{A})$.
\end{enumerate}

For each $j\in \Z$,  let $T^{0}$ be the identity mapping and set
\begin{equation}
\mathcal{A}_j\coloneqq T^{-j}(\mathcal{A}).
\end{equation}
Then $(\mathcal{A}_j)_{j\in \mathbb{Z}}$ forms an increasing sequence of von Neumann sub-algebras in $\M$.
	
\begin{rem}
It is worthwhile to point out that $T^{-j}(\mathcal{A})$ is a von Neumann algebra for each $j\in \Z$. Indeed, for negative $j\in \Z$, $T^{-j}(\mathcal{A})$ is a von Neumann algebra follows from the normality of $T$ directly. To verify $T^{-j}(\mathcal{A})$ is a von Neumann for positive $j\in \Z$, by induction argument, it suffices to verify that $T^{-1}(\mathcal{A})$ is a von Neumann algebra. This   follows from the fact that $T^{-1}$ is also $w^{*}$-to-$w^{*}$ continuous (i.e. $T^{-1}$ is normal). By the Banach-Dieudonn\'e theorem, it suffices to prove that the graph $\{(x,T^{-1}(x)):x\in \mathcal{M}\}$ is $w^{*}$-closed. Indeed, for every net $(x_{\alpha},T^{-1}(x_{\alpha}))_{\alpha\in\triangle}\subseteq \{(x,T^{-1}(x):x\in \mathcal{M})\}$ such that $(x_{\alpha},T^{-1}(x_{\alpha}))\to (y,z)$ in the $w^{*}$-topology, we now verify that $z=T^{-1}(y)$. Since $T^{-1}(x_{\alpha})\stackrel{w^{*}}{\to}z$, the $w^{*}$-to-$w^{*}$ continuity (i.e. normality) of $T$ yields that $x_{\alpha}\stackrel{w^{*}}{\to}T(z)$. On the other hand, $x_{\alpha}\stackrel{w^{*}}{\to}y$ by assumption. Hence, we have $y=T(z)$, that is, $z=T^{-1}(y)$.
\end{rem}
	
Let $\mathcal{A}_{\infty}\coloneqq \bigvee_{j\in \mathbb{Z}}\mathcal{A}_{j}$ be the von Neumann algebra generated by $(\mathcal{A}_{j})_{j\in \Z}$ and set $\mathcal{A}_{-\infty}\coloneqq \bigcap_{j\in \Z}\mathcal{A}_{j}$. For each $j\in \N$, let $\mathcal{E}_{j}$ be the conditional expectation from $\M$ onto $\A_{j}$. For $f\in L_{1}(\M)$, we assume without loss of generality that $\mathcal{E}_{-\infty}(f)=\tau(f)=0$ and $\mathcal{E}_{\infty}(f)=f$. For each $j\in \Z$, define the martingale difference operator $d_{j}:L_{1}(\M)\to L_{1}(\M)$ as follows
	\begin{equation}
		d_{j}(f)\coloneqq \mathcal{E}_j\left[f\right]-\mathcal{E}_{j-1}\left[f\right].
	\end{equation}
	It follows from the definition of $d_{j}$ that 
	\begin{equation}\label{nc-martingale decomposition 1}
		f=\sum\limits_{j\in \Z}d_{j}(f).
	\end{equation}

\begin{lem}\label{lemma nc-1}
Keeping notations as above, we have
\begin{enumerate}[{\rm (i)}]
\item  $\mathcal{E}_{j-1}d_j=0$; $d_{i}d_{j}=0$, if $i\not=j$;
\item $Td_{j}=d_{j+1}T$;
\item $d_{j}T^{-1}=T^{-1}d_{j+1}$.
\end{enumerate}
\end{lem}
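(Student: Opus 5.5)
The plan is to verify the three identities directly from the definition $d_j=\mathcal{E}_j-\mathcal{E}_{j-1}$, together with two structural facts. The first is the tower property of conditional expectations for the increasing filtration $(\mathcal{A}_j)_{j\in\Z}$, namely $\mathcal{E}_i\mathcal{E}_j=\mathcal{E}_{\min\{i,j\}}$. The second is the intertwining relation
\[
T\mathcal{E}_j=\mathcal{E}_{j+1}T,\qquad j\in\Z,
\]
which encodes how $T$ shifts the filtration. Once these are available, everything reduces to algebra with commuting idempotents.

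For (i), apply the tower property: $\mathcal{E}_{j-1}d_j=\mathcal{E}_{j-1}\mathcal{E}_j-\mathcal{E}_{j-1}\mathcal{E}_{j-1}=\mathcal{E}_{j-1}-\mathcal{E}_{j-1}=0$. For $i\neq j$, say $i<j$, we have $i\le j-1$, so $\mathcal{E}_i\mathcal{E}_j=\mathcal{E}_i=\mathcal{E}_i\mathcal{E}_{j-1}$, which gives $\mathcal{E}_i d_j=0$ and likewise $\mathcal{E}_{i-1}d_j=0$. Hence $d_id_j=0$. The case $i>j$ is symmetric, since the $\mathcal{E}$'s commute with one another and therefore $d_id_j=d_jd_i$.

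The main point is the intertwining relation. We have $T(\mathcal{A}_j)=T^{1-j}(\mathcal{A})$, and since $T$ is a $*$-isomorphism its inverse is a genuine map. The paper writes $\mathcal{A}_j=T^{-j}(\mathcal{A})$ and calls the sequence increasing, so $T$ maps $\mathcal{A}_j$ onto $\mathcal{A}_{j+1}$; I will match the indexing convention so that $T\mathcal{A}_j=\mathcal{A}_{j+1}$. Conditional expectations are characterized by trace duality: $\mathcal{E}_{j+1}(y)$ is the unique element of $\mathcal{A}_{j+1}$ with $\tau(\mathcal{E}_{j+1}(y)a)=\tau(ya)$ for all $a\in\mathcal{A}_{j+1}$. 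Set $y=T(f)$ and write $a=T(b)$ with $b\in\mathcal{A}_j$. Because $T$ is multiplicative and trace preserving,
\[
\tau(T(\mathcal{E}_jf)\,T(b))=\tau(\mathcal{E}_j(f)b)=\tau(fb)=\tau(T(f)T(b)).
\]
Since $T(\mathcal{E}_jf)\in\mathcal{A}_{j+1}$, uniqueness gives $T\mathcal{E}_j=\mathcal{E}_{j+1}T$, first on $\mathcal{M}$ and then on $L_1(\M)$ by density, using that $T$ extends isometrically because it preserves the trace. Subtracting the relations for $j$ and $j-1$ yields (ii): $Td_j=d_{j+1}T$. Part (iii) then follows by composing (ii) with $T^{-1}$ on both sides, i.e. $d_jT^{-1}=T^{-1}(Td_j)T^{-1}=T^{-1}d_{j+1}TT^{-1}=T^{-1}d_{j+1}$.

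The only delicate step is getting the direction of the shift consistent with the convention $\mathcal{A}_j=T^{-j}(\mathcal{A})$ and the invariance $\mathcal{A}\subseteq T^{-1}(\mathcal{A})$. If the convention in fact gives $T\mathcal{A}_j=\mathcal{A}_{j-1}$, the same argument produces the mirrored identity, and I would adjust the indexing accordingly. The extension of $T$ to $L_1(\M)$ is routine.
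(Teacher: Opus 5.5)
\textbf{There is a genuine gap in the orientation step for the intertwining relation.} Your treatment of items (i) and (iii) is fine and matches the paper, which calls (i) trivial and derives (iii) from (ii). The problem is the claim that $T$ maps $\mathcal{A}_j$ onto $\mathcal{A}_{j+1}$. From the definition,
\[
T(\mathcal{A}_j)=T^{1-j}(\mathcal{A})=T^{-(j-1)}(\mathcal{A})=\mathcal{A}_{j-1},
\]
which is what your own computation already says. The fact that $(\mathcal{A}_j)$ is increasing does not turn this into $\mathcal{A}_{j+1}$. Your duality argument needs every $a\in\mathcal{A}_{j+1}$ to be of the form $T(b)$ with $b\in\mathcal{A}_j$, and that fails in general. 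Run with the correct orientation, an argument that uses only multiplicativity and trace preservation gives $T\mathcal{E}_j=\mathcal{E}_{j-1}T$, hence $Td_j=d_{j-1}T$. That is the mirror of item (ii), not item (ii). ``Adjusting the indexing'' is not available to you: the filtration is fixed by $\mathcal{A}_j=T^{-j}(\mathcal{A})$, so doing so would prove a different statement.

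\textbf{The missing ingredient is assumption (iii), the self-adjointness of $T$ on $L_2(\M)$, which your proposal never uses.} The paper's proof invokes it, though it mislabels it as (iv). The paper tests against projections $p\in\mathcal{A}_{j+1}$, uses the correctly oriented fact $T(p)\in\mathcal{A}_j$, and moves $T$ across the trace:
\[
\tau(T(f)\,p)=\tau(f\,T(p))=\tau(\mathcal{E}_j(f)\,T(p))=\tau(T(\mathcal{E}_jf)\,p).
\]
Since $T(\mathcal{E}_jf)\in\mathcal{A}_{j-1}\subseteq\mathcal{A}_{j+1}$, this gives $\mathcal{E}_{j+1}T=T\mathcal{E}_j$.

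You can also repair your own route. The standing assumptions (i)--(ii) on $T$ make it a unitary on $L_2(\M)$, and (iii) makes it self-adjoint. Hence $T^{-1}=T$, and so $T(\mathcal{A}_j)=T^{-1}(\mathcal{A}_j)=\mathcal{A}_{j+1}$; after that your duality computation goes through verbatim.

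Your suspicion about the direction is well founded. Without (iii), for example for a bilateral shift, the mirrored relation is the true one. Item (ii) together with its mirror gives $d_{j-1}=d_{j+1}$, hence $d_{j+1}=d_{j+1}^2=d_{j+1}d_{j-1}=0$ by item (i). That means the filtration is constant, which is exactly what $T=T^{-1}$ and $\mathcal{A}\subseteq T^{-1}(\mathcal{A})$ force under the paper's hypotheses. So the stated orientation cannot be reached without using (iii).
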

	
\begin{proof}
Item (i) is trivial. For $j\in \mathbb{Z}$ and projection $p\in \A_{j+1}$ the $\ast$-isomorphism of $T$ implies that $T(p)$ is a projection in $\A_j$.  Then, by Assumption (iv) on $T$, we have 
\begin{align*}
\tau(\mathcal{E}_{j+1}(Tf) p)&=\tau(Tf \cdot p)=\tau(f\cdot Tp)\\
&=\tau(\mathcal{E}_j(f)\cdot Tp)=\tau(T\mathcal{E}_j(f)p).
\end{align*}
Since $p$ is arbitrary, it follows that 
\begin{equation}\label{ET}
\mathcal{E}_{j+1}(Tf) =T\mathcal{E}_{j}(f),
\end{equation}
which gives us item (ii) because $T$ is linear. Item (iii) follows from item (ii) directly.
\end{proof}
	
%
	
The following proposition is an immediate consequence of the martingale convergence theorem.
\begin{prop}\label{con-L1}
Assume that $f\in L_1(\M)$ such that $\mathcal{E}_{-\infty}(f)=\tau(f)=0$.
\begin{enumerate}[{\rm (i)}]
\item $\mathcal{E}_j(f)\to f$ as $j\to\infty$ in $L_1(\M)$; $\mathcal{E}_j(f)\to 0$ as $j\to-\infty$ in $L_1(\M)$.
\item $\V_{j}(f)=f-\mathcal{E}_{-j}(f)\to f$ as $j\to\infty$ in $L_1(\M)$;  $\V_{j}(f)=f-\mathcal{E}_{-j}(f)\to 0$ as $j\to-\infty$ in $L_1(\M)$. 
\end{enumerate}
\end{prop}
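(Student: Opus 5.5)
Both items of Proposition \ref{con-L1} follow from the noncommutative martingale convergence theorem in $L_1(\M)$, applied once to the increasing filtration $(\A_j)_{j\geq 0}$ and once to the decreasing family $(\A_{-j})_{j\geq 0}$. I would first treat the $L_2$ case and then extend to $L_1$ by density, using that each $\mathcal{E}_j$ is an $L_1$-contraction.

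\emph{Step 1 ($L_2$ case).} On $L_2(\M)$ each $\mathcal{E}_j$ is the orthogonal projection onto $L_2(\A_j)$, and these projections increase with $j$. Since $\bigcup_{j}\A_j$ is $w^*$-dense in $\A_\infty$, the closure of $\bigcup_j L_2(\A_j)$ equals $L_2(\A_\infty)$. Hence $\mathcal{E}_j\to\mathcal{E}_\infty$ strongly on $L_2(\M)$ as $j\to\infty$.

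In the other direction, the ranges decrease to $\bigcap_j L_2(\A_j)$, which is $L_2(\A_{-\infty})$. To justify this identification, I would note that an element in every $L_2(\A_j)$ has spectral projections in every $\A_j$, hence in $\A_{-\infty}$. So $\mathcal{E}_j\to\mathcal{E}_{-\infty}$ strongly as $j\to-\infty$.

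\emph{Step 2 (extension to $L_1$).} Fix $f\in L_1(\M)$ and $\varepsilon>0$. Choose $g\in\M\subseteq L_2(\M)$ with $\|f-g\|_1<\varepsilon$. Then
\[
\|\mathcal{E}_j f-\mathcal{E}_{\pm\infty}f\|_1\leq 2\varepsilon+\|\mathcal{E}_j g-\mathcal{E}_{\pm\infty}g\|_2,
\]
because $\|\cdot\|_1\leq\|\cdot\|_2$ for a tracial state and the conditional expectations are $L_1$-contractive. By Step 1 the last term tends to zero. Using the standing normalization $\mathcal{E}_\infty(f)=f$ and $\mathcal{E}_{-\infty}(f)=\tau(f)=0$, this gives (i).

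\emph{Step 3 (item (ii)).} This is a rewriting of (i). We have $\V_j(f)=f-\mathcal{E}_{-j}(f)$. As $j\to\infty$, $-j\to-\infty$, so $\mathcal{E}_{-j}(f)\to0$ and $\V_j(f)\to f$. As $j\to-\infty$, $\mathcal{E}_{-j}(f)\to f$ and $\V_j(f)\to0$.

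The only point requiring care is the identification of the $L_2$-limits in Step 1, namely $\bigcap_j L_2(\A_j)=L_2(\A_{-\infty})$ and the density of $\bigcup_j L_2(\A_j)$ in $L_2(\A_\infty)$. These are standard consequences of $w^*$-density and of the definition of the intersection algebra. Everything else is routine.
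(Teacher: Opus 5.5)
Your proof is correct and takes the same route as the paper. The paper simply calls the proposition an immediate consequence of the (forward and reverse) martingale convergence theorem, and you supply the standard proof of that theorem: strong convergence of the monotone orthogonal projections $\mathcal{E}_j$ on $L_2(\mathcal{M})$, followed by an $L_1$-density argument. You are also right to invoke the standing normalization $\mathcal{E}_{\infty}(f)=f$ explicitly, since the proposition's own hypotheses do not include it and the first claim in (i) fails without it.
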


	This proposition together with \eqref{ET} implies
	\begin{equation}
		\mathcal{E}_{0}T^{k}(f)=T^{k}\mathcal{E}_{-k}(f)\to 0,
	\end{equation}
	and
	\begin{equation}
		\V_{0}T^{-k}(f)=T^{-k}\V_{k}(f)\to 0
	\end{equation}
	in $L_{1}$-norm as $k\to \infty$.

	\begin{thm}\label{e-decomposition}
	For $1\leq p<\infty$, and  $f\in L_{p}(\M)$ with $\tau(f)=0$, the following statements are equivalent.
		\begin{enumerate}[{\rm (i)}]
			\item There exist  $m,~g\in L_p(\M)$ such that $f=m+g-Tg$ and $(T^{j}m)_{j\in \Z}$ forms  a sequence of martingale differences with respect to  $(\mathcal{A}_{j})_{j\in \Z}$;
			\item Both $\sum_{k=0}^{\infty}\mathcal{E}_0(T^{k}f)$ and $\sum_{k=0}^{\infty}\left[T^{-k}(f)-\mathcal{E}_0(T^{-k}(f))\right]$
			converge in $L_{p}$-norm.
		\end{enumerate}
	\end{thm}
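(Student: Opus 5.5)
This is the noncommutative Gordin/Volný coboundary decomposition, and I would follow the classical argument of Volný closely, using Lemma \ref{lemma nc-1} and \eqref{ET} to move $T$ through the conditional expectations. Throughout I write $\V_0=\mathrm{id}-\mathcal{E}_0$, as in Proposition \ref{con-L1}.

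\textbf{(ii)$\Rightarrow$(i).} Set
\[
g_1:=\sum_{k=0}^{\infty}\mathcal{E}_{-1}(T^{k}f),\qquad g_2:=\sum_{k=1}^{\infty}\V_{0}(T^{-k}f),
\]
and define $g:=g_1-g_2$ and $m:=f-g+Tg$.

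\emph{Convergence.} By \eqref{ET}, $\mathcal{E}_{-1}(T^kf)=\mathcal{E}_{-1}\mathcal{E}_0(T^kf)$. Since $\mathcal{E}_{-1}$ is an $L_p$-contraction, $g_1$ converges in $L_p$ because the first series in (ii) does. The series $g_2$ converges by the second assumption in (ii). So $g\in L_p(\M)$, and hence $m\in L_p(\M)$.

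\emph{Computing $m$.} Using $T\mathcal{E}_{j}=\mathcal{E}_{j+1}T$, I get
\[
Tg_1=\sum_{k\ge0}\mathcal{E}_0(T^{k+1}f)=\sum_{k\ge1}\mathcal{E}_0(T^kf),
\qquad
Tg_2=\sum_{k\ge1}\V_1(T^{-k+1}f)=\sum_{k\ge0}\V_1(T^{-k}f).
\]
Plugging these into $m=f-g_1+g_2+Tg_1-Tg_2$ and grouping term by term gives
\[
m=\sum_{k\ge0}d_0(T^kf)\;-\;\sum_{k\ge1} d_1(T^{-k}f)\cdot(\text{suitable index shift}),
\]
and after reindexing I expect $m=\sum_{k\in\Z}d_0(T^kf)$, i.e. $m\in L_p(\mathcal{A}_0)$ with $\mathcal{E}_{-1}m=0$. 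I will check this algebra carefully; it is the key computation.

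\emph{Martingale differences.} From $\mathcal{E}_{-1}m=0$ and $m\in\mathcal{A}_0$, Lemma \ref{lemma nc-1}(ii) gives $T^jm\in\mathcal{A}_j$ and $\mathcal{E}_{j-1}T^jm=T^j\mathcal{E}_{-1}m=0$. Hence $(T^jm)_{j\in\Z}$ is a sequence of martingale differences, which is (i).

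\textbf{(i)$\Rightarrow$(ii).} Let $f=m+g-Tg$. Since $m$ is a martingale difference, $\mathcal{E}_0T^km=T^k\mathcal{E}_{-k}m=0$ for $k\ge1$. So the first series telescopes:
\[
\sum_{k=0}^{N}\mathcal{E}_0T^kf=\mathcal{E}_0m+\mathcal{E}_0g-\mathcal{E}_0T^{N+1}g .
\]
Similarly, $\V_0T^{-k}m=0$ for $k\ge0$, since $T^{-k}m\in\mathcal{A}_{-k}\subseteq\mathcal{A}_0$. Hence
\[
\sum_{k=0}^N\V_0T^{-k}f=\V_0T^{-N}g-\V_0Tg,
\]
using $\V_0 g-\V_0Tg+\V_0T^{-1}g-\V_0 g+\cdots$ telescoping.

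It therefore remains to show $\mathcal{E}_0T^{N}g=T^N\mathcal{E}_{-N}g\to0$ and $\V_0T^{-N}g=T^{-N}\V_N g\to0$ in $L_p$. Since $T$ is trace preserving and a $*$-isomorphism, it is an $L_p$-isometry, so this reduces to $\mathcal{E}_{-N}g\to \mathcal{E}_{-\infty}g$ and $\mathcal{E}_Ng\to\mathcal{E}_\infty g$ in $L_p$. These follow from the $L_p$ version of Proposition \ref{con-L1}: they hold on the dense subspace $\M$, and the $\mathcal{E}_j$ are uniformly $L_p$-contractive. This also needs $\mathcal{E}_{-\infty}g=0$ and $\mathcal{E}_\infty g=g$, which I arrange by replacing $g$ with $\mathcal{E}_\infty g-\mathcal{E}_{-\infty}g$. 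This replacement keeps $f=m+g-Tg$, because $f$ and $m$ satisfy these normalizations and $T$ commutes with $\mathcal{E}_{\pm\infty}$.

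\textbf{Main obstacle.} I expect the main obstacle to be the bookkeeping identity $m=\sum_k d_0(T^kf)$ in the direction (ii)$\Rightarrow$(i), together with justifying the $L_p$ (rather than $L_1$) convergence at $\pm\infty$ when $p=1$ or when the limits are not a priori normalized. For that convergence I would first try the density argument just described.
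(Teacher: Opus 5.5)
Your direction (i)$\Rightarrow$(ii) is fine and is the paper's telescoping argument; your replacement of $g$ by $\mathcal{E}_\infty g-\mathcal{E}_{-\infty}g$ even supplies a normalization the paper takes for granted. The gap is in (ii)$\Rightarrow$(i), exactly at the computation you deferred. Your two series are built at different levels: $g_1$ uses $\mathcal{E}_{-1}$, but $g_2$ uses $\V_0=\mathrm{id}-\mathcal{E}_0$. Your own formulas $Tg_1=\sum_{k\ge1}\mathcal{E}_0(T^kf)$ and $Tg_2=\sum_{k\ge0}(\mathrm{id}-\mathcal{E}_1)(T^{-k}f)$ give
\[
Tg_1-g_1=\sum_{k\ge0}d_0(T^kf)-\mathcal{E}_0f,\qquad g_2-Tg_2=\sum_{k\ge1}d_1(T^{-k}f)-(\mathrm{id}-\mathcal{E}_1)f,
\]
hence
\[
m=f+(Tg_1-g_1)+(g_2-Tg_2)=\sum_{k\ge0}d_0(T^kf)+\sum_{k\ge0}d_1(T^{-k}f).
\]
The second sum lies in the range of $d_1$, and $d_0d_1=0$ by Lemma \ref{lemma nc-1}(i). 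So no reindexing in $k$ can absorb it into $\sum_{k<0}d_0(T^kf)$; note that $d_1T^{-k}=T^{-k}d_{k+1}$, whereas $d_0T^{-k-1}=T^{-k-1}d_{k+1}$. The identity $m=\sum_{k\in\Z}d_0(T^kf)$ is therefore false for your $g$.

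Concretely, let $f=a+b$ with $0\neq a=d_0(a)$ and $0\neq b=d_1(b)$. Then (ii) holds, yet every term of $g_1$ and of $g_2$ vanishes. So $g=0$ and $m=f$. But $\mathcal{E}_0m=a\neq m$, and $T^jm$ has nonzero components in the ranges of both $d_j$ and $d_{j+1}$. Hence $(T^jm)_j$ is not a martingale difference sequence under any indexing.

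The repair is to use the same level in both halves. Take
\[
g_2=\sum_{k\ge1}(\mathrm{id}-\mathcal{E}_{-1})(T^{-k}f)=T^{-1}\sum_{k\ge0}(\mathrm{id}-\mathcal{E}_0)(T^{-k}f),
\]
which converges by (ii) because $T^{-1}\mathcal{E}_0=\mathcal{E}_{-1}T^{-1}$. Then $g_2-Tg_2=\sum_{k\ge1}d_0(T^{-k}f)-(\mathrm{id}-\mathcal{E}_0)f$, and the boundary terms cancel: $f-\mathcal{E}_0f-(\mathrm{id}-\mathcal{E}_0)f=0$. This gives $m=\sum_{k\in\Z}d_0(T^kf)$, and your final step goes through. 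Alternatively, keep your $g_2$ and use $\mathcal{E}_0$ in $g_1$; this yields $m=\sum_{k\in\Z}d_1(T^kf)$.

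With this correction, your closed-form construction is the classical Voln\'y one. It differs from the paper's route, which builds $g$ from its components $g_j=d_j(g)$, sets $m=\sum_k d_{-1}(T^kf)$, and checks $d_j(f)=d_j(m+g-Tg)$ level by level. Your version has the advantage that $m=f-g+Tg\in L_p(\M)$ comes for free, with no separate convergence argument for $m$ or for $\sum_j g_j$.
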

	
\begin{proof}
It suffices to provide the proof for the case $p=1$ since the same proof still works well for the general case. 
		
		The ``$(i)\Leftarrow (ii)$'' part. Assuming (ii) holds, by Proposition \ref{con-L1} and \eqref{ET}, it is easy to see that, for each $j\in \Z$,
		\begin{equation}\label{nc-form 1}
			\sum\limits_{k=0}^{\infty}\mathcal{E}_{j} T^{k}(f)=T^{j}\left(\sum\limits_{k=0}^{\infty}\mathcal{E}_{0}T^{k-j}(f)\right)=T^{j}\left(\sum\limits_{k=-j}^{\infty}\mathcal{E}_0(T^{k}(f))\right)
		\end{equation}
		converges in $L_{1}(\M)$.
		Similarly,  for each $j\in \Z$, the following sequence 	converges with respect to the $L_{1}$-norm
		\begin{equation}\label{form 2}
			\sum\limits_{k=0}^{\infty}\V_{j}T^{-k}(f)=\sum\limits_{k=0}^{\infty}T^{-k}(f)-\mathcal{E}_{j}T^{-k}(f)=T^{j}\left(\sum\limits_{k=-j}^{\infty}T^{-k}(f)-\mathcal{E}_0\left[T^{-k}(f)\right]\right).
		\end{equation}

		For each $j\in \Z$, define
		\begin{equation}
			g_{j}=
			\begin{cases}
				\sum\limits_{k=0}^{\infty}d_{j}T^{k}(f),~\mbox{if}~j\leq -2,\\
				-\sum\limits_{k=1}^{\infty}d_{j}T^{-k}(f),~\mbox{if}~j\geq -1.
			\end{cases}
		\end{equation}
		Note that
		\begin{equation}
			\begin{split}
				\sum\limits_{j=2}^{n}\sum\limits_{k=0}^{\infty}\mathcal{E}_{-j}T^{k}(f)
				&=\sum\limits_{k=0}^{\infty}\left(\sum\limits_{j=2}^{n}\mathcal{E}_{-j}\right)T^{k}(f)\\
				&=\sum\limits_{k=0}^{\infty}\left(\mathcal{E}_{-2}-\mathcal{E}_{-n-1}\right)T^{k}(f)\\
				&=\sum\limits_{k=0}^{\infty}\mathcal{E}_{-2}T^{k}(f)-\sum\limits_{k=0}^{\infty}\mathcal{E}_{-n-1}T^{k}(f).
			\end{split}
		\end{equation}
		Then, by \eqref{nc-form 1}, we obtain the  convergence of $\sum_{j=2}^{\infty}g_{-j}$ in $L_1(\M)$.
		Analogously, $\sum_{j=-1}^{\infty}g_{j}$ converges in $L_{1}$-norm, which further yields that $g=\sum\limits_{j\in \Z}g_{j}$ is well defined. It follows from Lemma \ref{lemma nc-1} (i) that
$d_{j}(g)=g_{j}$ for all $j\in \Z$. Define
\[
m\coloneqq \sum\limits_{j\in \Z}d_{-1}T^{j}(f).
\]
It is easy to see that $(T^{j}(m))_{j\in\Z}$ forms a sequence of martingale differences with respect to filtration $(\mathcal{A}_{j})_{j\in \Z}$. Indeed, for each $j\in \Z$, it follows from Lemma \ref{lemma nc-1} (i) that
\[
\begin{split}
\mathcal{E}_{j-2}T^{j}(m)&=	\mathcal{E}_{j-2}\left(\sum\limits_{k\in \Z}T^{j}d_{-1}T^{k}(f)\right)=\mathcal{E}_{j-2}\left(\sum\limits_{k\in \Z}d_{j-1}T^{j+k}(f)\right)\\
                                              &=\sum\limits_{k\in \Z}\mathcal{E}_{j-2}d_{j-1}T^{j+k}(f)=0.
\end{split}
\]
We now verify that $f=m+g-Tg$. Since $f=\sum_{j\in \Z}d_{j}(f)$,  it suffices to show that $d_{j}(f)=d_{j}(m+g-Tg)$ for all $j\in \Z$. Indeed, for $j\leq -2$, by Lemma \ref{lemma nc-1} (i), we have
		\begin{equation}\label{lemma case 1}
			d_{j}(m)=d_{j}\left(\sum_{k\in \Z}d_{-1}T^{j}(f)\right)=\sum_{k\in \Z}d_{j}d_{-1}T^{k}(f)=0.
		\end{equation}
		On the other hand, by Lemma \ref{lemma nc-1} (ii), for each $j\leq -2$,
\[
\begin{split}
				d_{j}(g)-d_{j}T(g)&=g_{j}-Td_{j-1}(g)=g_{j}-Tg_{j-1}\\
				&=\sum\limits_{k=0}^{\infty}d_{j}T^{k}(f)-T\left(\sum\limits_{k=0}^{\infty}d_{j-1}T^{k}(f)\right)\\
				&=\sum\limits_{k=0}^{\infty}d_{j}T^{k}(f)-d_{j}\left(\sum\limits_{k=0}^{\infty}T^{k+1}(f)\right)\\
				&=d_{j}\left(\sum\limits_{k=0}^{\infty}T^{k}(f)-\sum\limits_{k=1}^{\infty}T^{k}(f)\right)\\
				&=d_{j}(f).
			\end{split}
\]
		Hence, for each $j\leq -2$, we have $	d_{j}(f)=d_{j}\left(m+g-Tg\right)$. For $j>-1$, by the definition of $m$, we get $d_{j}(m)=0$, and
\[
\begin{split}
				d_{j}\left(m+g-Tg\right)&=g_{j}-d_{j}T(g)=g_{j}-Td_{j-1}(g)=g_{j}-T(g_{j-1})\\
				&=-\sum\limits_{k=1}^{\infty}d_{j}T^{-k}(f)+\sum\limits_{k=1}^{\infty}Td_{j-1}T^{-k}(f)\\
				&=\sum\limits_{k=1}^{\infty}d_{j}T^{-k+1}(f)-\sum\limits_{k=1}^{\infty}d_{j}T^{-k}(f)\\
				&=d_{j}\left(\sum\limits_{k=0}^{\infty}T^{-k}(f)-\sum\limits_{k=1}^{\infty}T^{-k}(f)\right)\\
				&=d_{j}(f).
			\end{split}
\]
		
		For $j=-1$, we have
\[
\begin{split}
d_{-1}\left(m+g-Tg\right)&=d_{-1}(m)+d_{-1}(g)-d_{-1}T(g)\\
&=m+g_{-1}-Td_{-2}(g)\\
&=m+g_{-1}-T(g_{-2})\\
&=\sum\limits_{k\in \Z}d_{-1}T^{k}(f)-\sum\limits_{k=1}^{\infty}d_{-1}T^{-k}(f)-T\left(\sum\limits_{k=0}^{\infty}d_{-2}T^{k}(f)\right)\\
&=\sum\limits_{k\in \Z}d_{-1}T^{k}(f)-\sum\limits_{k=1}^{\infty}d_{-1}T^{-k}(f)-\sum\limits_{k=0}^{\infty}d_{-1}T^{k+1}(f)\\
&=\sum\limits_{k\in \Z}d_{-1}T^{k}(f)-\sum\limits_{k=1}^{\infty}d_{-1}T^{-k}(f)-\sum\limits_{k=1}^{\infty}d_{-1}T^{k}(f)\\
&=d_{-1}(f).
\end{split}
\]
		
We now turn to proof of ``$(i)\Rightarrow (ii)$''. Since $( T^{j}(m))_{j\in \Z}$ is a sequence of martingale differences with respect to the filtration $(\mathcal{A}_{j})_{j\in \Z}$, we assume without loss of generality that $\mathcal{E}_0(m)=m$. To verify the convergence of $\sum_{k=0}^{\infty}\mathcal{E}_0(T^{k}f)$, it suffices to note that $f=m+g-Tg$ and $\lim_{k\to \infty}\|\mathcal{E}_0T^{k}(g)\|_{L_1(\M)}=0$,
		\begin{equation}\label{convergence 1}
			\begin{split}
				\sum_{k=0}^{\infty}\mathcal{E}_0(T^{k}f)
				&=\sum\limits_{k=1}^{\infty}\mathcal{E}_0T^{k}(m+g-Tg)\\
				&=\sum\limits_{k=1}^{\infty}\mathcal{E}_0T^{k}(m)+\mathcal{E}_{0}T(g)-\lim\limits_{k\to \infty}\mathcal{E}_0T^{k+1}(g)\\
				&=\sum\limits_{k=1}^{\infty}\mathcal{E}_0T^{k}(m)+\mathcal{E}_{0}T(g).
			\end{split}
		\end{equation} 
		Since $(T^{j}(m))_{j\in\Z}$ is a sequence of martingale differences with respect to the filtration $(\mathcal{A}_{j})_{j\in \Z}$, it follows that, for each $k\geq 1$, we have
		\begin{equation}\label{convergence 2}
			\mathcal{E}_{0}T^{k}(m)=0.
		\end{equation}
		Substituting \eqref{convergence 2} into \eqref{convergence 1} yields that
		\begin{equation}
			\sum_{k=0}^{\infty}\mathcal{E}_0(T^{k}f)=\mathcal{E}_{0}T(g),
		\end{equation}
		which is just the desired convergence of the series.
		Analogously,
		\begin{equation}\label{convergence 3}
			\begin{split}
				\sum_{k=0}^{\infty}\left[T^{-k}(f)-\mathcal{E}_0(T^{-k}(f))\right]&=\sum\limits_{k=0}^{\infty}\V_{0}T^{-k}(f)=\sum\limits_{k=0}^{\infty}T^{-k}\V_{k}(m+g-Ug)\\
				&=\sum\limits_{k=0}^{\infty}T^{-k}\V_{k}(m)+\V_{0}(g)-\lim\limits_{k\to \infty}T^{-k}\V_{k}(g)\\
				&=\sum\limits_{k=0}^{\infty}T^{-k}\V_{k}(m)+\V_{0}(g).
			\end{split}
		\end{equation}
		
		Note that $\mathcal{E}_{0}(m)=m$, then it follows that $\mathcal{E}_{k}(m)=m$ for each $k\geq 0$. For each $k\geq 1$
		\begin{equation}
			\V_{k}(m)=m-\mathcal{E}_{k}(m)=0,
		\end{equation}
		and, substituting the identity into \eqref{convergence 3} yields the desired convergence of the series. This completes the proof of the theorem.
	\end{proof}
	
	In the sequel, for $f\in L_p(\M)$, let 
	$$S_n(f)=\sum_{k=0}^nT^k(f). $$
	Combining Theorem \ref{e-decomposition} with Theorem \ref{Lp-version 1}, we obtain the following.
	
\begin{cor}
Let $1\leq p<\infty$ and $f\in L_p(\M)$ which satisfies the assumption in Theorem \ref{e-decomposition}. Then we have
\[
\tau\left(\mathds{1}_{(n,\infty)}(|S_{n}(f)|)\right)=O\left(\frac{1}{n^{p\left(1-\frac{1}{\min\{p,2\}}\right)}}\right).
\]
\end{cor}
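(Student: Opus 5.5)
We use Theorem \ref{e-decomposition} to write $f=m+g-Tg$ with $m,g\in L_p(\M)$ and $(T^{j}m)_{j\in\Z}$ a martingale difference sequence with respect to $(\mathcal{A}_j)_{j\in\Z}$. Summing the decomposition telescopes:
\[
S_n(f)=\sum_{k=0}^{n}T^k(m)+\sum_{k=0}^{n}\big(T^k(g)-T^{k+1}(g)\big)=M_n+g-T^{n+1}(g),
\qquad M_n:=\sum_{k=0}^{n}T^k(m).
\]
We then split the distribution function with the quasi-triangle inequality \cite[Lemma 2.1]{JRWZ2020}, as in \eqref{estimate 1}, used twice:
\[
\tau\big(\mathds{1}_{(n,\infty)}(|S_n(f)|)\big)\le \tau\big(\mathds{1}_{(n/2,\infty)}(|M_n|)\big)+\tau\big(\mathds{1}_{(n/4,\infty)}(|g|)\big)+\tau\big(\mathds{1}_{(n/4,\infty)}(|T^{n+1}g|)\big).
\]

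For the martingale part, $M_n$ is the partial sum of the martingale differences $T^k(m)$, $k=0,\dots,n$, with respect to $\mathcal{A}_0\subseteq\cdots\subseteq\mathcal{A}_n$; after shifting indices it is a martingale in the sense of Section \ref{sec:pre}. Since $T$ is a trace preserving $*$-isomorphism, $|T^k(m)|=T^k(|m|)$ and $\|T^k(m)\|_p=\|m\|_p=:M$. Theorem \ref{Lp-version 1} with $r$ fixed (of order $1/2$, noting the sum has $n+1$ terms) then gives the bound $C_pM^p n^{-p(1-1/\min\{p,2\})}$.

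For the coboundary terms, Chebyshev's inequality gives $\tau(\mathds{1}_{(n/4,\infty)}(|g|))\le 4^p\|g\|_p^p n^{-p}$. The same bound holds for $T^{n+1}g$, because $\mathds{1}_{(s,\infty)}(|T^{n+1}g|)=T^{n+1}\big(\mathds{1}_{(s,\infty)}(|g|)\big)$ and $T$ preserves the trace. Since $p(1-1/\min\{p,2\})\le p$, these terms are $O(n^{-p(1-1/\min\{p,2\})})$ as well, and the corollary follows. For $p=1$ the exponent is $0$ and the claim is trivial.

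The main obstacle is justifying that $T$ commutes with functional calculus on unbounded $\tau$-measurable operators, so that the distribution of $T^kx$ equals that of $x$. We handle this by extending the normal trace preserving $*$-automorphism to $L_0(\M)$ and checking the identity on spectral projections, first for bounded elements and then by density.
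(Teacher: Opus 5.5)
Your proposal is correct and follows the same route as the paper: decompose $f=m+g-Tg$, telescope, split the distribution function with the quasi-triangle inequality, bound the martingale part by Theorem \ref{Lp-version 1}, and bound the coboundary part by Chebyshev's inequality. Your identity $S_n(f)=M_n+g-T^{n+1}(g)$ and your thresholds $n/2,\,n/4,\,n/4$ are in fact the correct versions of the paper's $S_n(f)=S_n(m)+g-T(g)$ and its mismatched splitting levels; also, for the coboundary terms Chebyshev only needs $\|T^{n+1}g\|_p=\|g\|_p$, which follows from extending $T$ isometrically to $L_p$ by density, so the spectral-projection argument you flag as the main obstacle is not needed there.
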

\begin{proof}
Since $f=m+g-T(g)$, it follows that 
\[
S_{n}(f)=S_{n}(m)+g-T(g),
\]
and consequently, 
\[
\tau\left(\mathds{1}_{(n,\infty)}(|S_{n}(f)|)\right)\leq \tau\left(\mathds{1}_{(n/3,\infty)}(|S_{n}(m)|)\right)+2 \tau\left(\mathds{1}_{(n,\infty)}(|g|)\right).
\]
Note that $g\in L_p(\M)$ implies $\tau\left(\mathds{1}_{(n,\infty)}(|g|)\right)=O(\frac{1}{n^p})$. Hence, the desired result follows from Theorem \ref{Lp-version 1}.
\end{proof}

\noindent{\bf Acknowledgement.}
This work was supported by NSFC (grant Nos. 11961131003, 12001541, 12125109, 12201646 \& 12471134); the Natural Science Foundation Hunan (grant Nos: 2023JJ40696, 2023JJ20058, 2024JJ1010 \& 2024RC3040); the CSU Innovation-Driven Research Programme (grant 2023CXQD016).

%

\providecommand{\bysame}{\leavevmode\hbox to3em{\hrulefill}\thinspace}
\providecommand{\MR}{\relax\ifhmode\unskip\space\fi MR }
\providecommand{\MRhref}[2]{%
  \href{http://www.ams.org/mathscinet-getitem?mr=#1}{#2}
}
\providecommand{\href}[2]{#2}


\end{document}